\newcommand{\charg}[3]{{{\rm H}}^{#1}({#2},{#3})}
\newtheorem{theorem}{Theorem}[section]
\newtheorem{lemma}[theorem]{Lemma}
\newtheorem{corollary}[theorem]{Corollary}
\newtheorem{proposition}[theorem]{Proposition}
\newtheorem{remark}[theorem]{Remark}
\newtheorem{example}[theorem]{Example}
\begin{document}

\title{Universal deformation rings for extensions of finite subgoups of $Gl_2(\mathbb{C})$}

\author{David C. Meyer\\Department of Mathematics\\University of Missouri\\
Columbia, MO 65211\\ \textsf{meyerdc@missouri.edu}}

\date{}

\maketitle

\begin{abstract}
In this paper we expand on previous results, studying the extent to which one can detect fusion in certain finite groups $\Gamma$, from information about the universal deformation rings of absolutely irreducible $\mathbb{F}_p\Gamma$-modules.  We consider groups which are extensions of finite irreducible subgroups of $Gl_2({\mathbb{C}})$ by elementary abelian $p$-groups of rank $2$.
\end{abstract}

\section{Introduction}
\label{s:intro}

In this paper we consider the function which assigns to an absolutely irreducible $\mathbb{F}_p\Gamma$-module $V$, its universal deformation ring $R(\Gamma,V)$.  We determine precisely when this function is nonconstant on the collection of two-dimensional $\mathbb{F}_p\Gamma$-modules.  Then, we show that when the function is nonconstant, we can use its graph to determine information about the internal structure of the group $\Gamma$.  Specifically, we connect the fusion of certain subgroups $N$ of $\Gamma$, to the kernels of those representations whose corresponding modules are a level set of the function $V \to R(\Gamma,V)$.  

We say that a pair of elements of $N$ are fused in $\Gamma$, if they are conjugate in $\Gamma$ but not in $N$, and we call the collection of elements of $N$ which are fused in $\Gamma$, the fusion of $N$ in $\Gamma$.  The universal deformation ring $R(\Gamma,V)$, is characterized by the property that the isomorphism class of every lift of $V$ over a complete local commutative Noetherian ring $R$ with residue field $\mathbb{F}_p$ arises from a unique local ring homomorphism $ R(\Gamma,V)\xrightarrow{\alpha} R$.  When the function $V \to R(\Gamma,V)$ is nonconstant, our goal  is to determine how to transfer information about universal deformation rings to information about the internal structure of the group.  One might expect a connection with fusion because it plays a key role in the character theory of $\Gamma$, which in turn enters into finding universal deformation rings of representations.

We will consider groups $\Gamma$ which are extensions of finite subgroups of $Gl_2({\mathbb{C}})$ by elementary abelian $p$-groups of rank $2$.  For any group $\Gamma$, let $\Sigma(\Gamma)$ denote the collection of isomorphism classes of absolutely irreducible two-dimensional $\mathbb{F}_p\Gamma$-modules.

We can now state our main results:

\begin{theorem}
\label{Th1}
Let $G$ be a finite irreducible subgroup of  $Gl_2({\mathbb{C}})$.  Let $p$ be an odd prime such that 
${\mathbb{F}}_p G$ is semisimple, and ${\mathbb{F}}_p$ is a sufficiently large field for $G$.  Let $\phi$ be an irreducible action of $G$ on $N = {\mathbb{Z}}/p{\mathbb{Z}}\times {\mathbb{Z}}/p{\mathbb{Z}}$.  Let $\Gamma = {\Gamma}_{\phi}$ be the corresponding semidirect product.  Then, the following two statements are equivalent,
\begin{itemize}
\item[i.]
$\phi$ is trivial on the center of $G$
\item[ii.]
there exists a $V$ in $\Sigma(\Gamma)$ with $R(\Gamma,V) \ncong {\mathbb{Z}}_p$.
\end{itemize}
\end{theorem}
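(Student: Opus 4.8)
The plan is to translate the statement into group cohomology, then into a question about constituents of the adjoint $\mathbb{F}_pG$-module, and to resolve the resulting equivalence by Schur's lemma in one direction and by the classification of the finite irreducible subgroups of $Gl_2(\mathbb{C})$ in the other.

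First I would carry out the standard cohomological reduction. For $V\in\Sigma(\Gamma)$ set $\mathrm{ad}\,V=\textrm{Hom}_{\mathbb{F}_p}(V,V)$ with $\Gamma$ acting by conjugation. Mazur's theory identifies the tangent space of $R(\Gamma,V)$ with $\charg{1}{\Gamma}{\mathrm{ad}\,V}$; together with completeness of $R(\Gamma,V)$ and the existence of the constant lift of $V$ (available since $\mathbb{Z}_pG$ is a maximal order), this gives $R(\Gamma,V)\cong\mathbb{Z}_p$ if and only if $\charg{1}{\Gamma}{\mathrm{ad}\,V}=0$. Since $N$ is a normal $p$-subgroup and $\mathrm{char}\,\mathbb{F}_p=p$, the fixed space $V^N$ is a nonzero $\Gamma$-submodule, so $V^N=V$: thus $N$ acts trivially on $V$, and $V$ is inflated from an absolutely irreducible two-dimensional $\mathbb{F}_pG$-module. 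As $\mathbb{F}_pG$ is semisimple, $\charg{i}{G}{-}=0$ for $i>0$, so the Lyndon--Hochschild--Serre spectral sequence for $N\trianglelefteq\Gamma$ collapses and yields $\charg{1}{\Gamma}{\mathrm{ad}\,V}\cong\charg{1}{N}{\mathrm{ad}\,V}^{G}$; and because $N$ acts trivially on $\mathrm{ad}\,V$ this is $\textrm{Hom}_{\mathbb{F}_pG}(N,\mathrm{ad}\,V)$. With $p$ odd one splits $\mathrm{ad}\,V=\mathbb{F}_p\oplus\mathrm{ad}^0V$ by the trace form, and since $N$ is two-dimensional absolutely irreducible the conclusion is that condition (ii) is equivalent to: \emph{$N$ is a constituent of $\mathrm{ad}^0V$ for some $V\in\Sigma(\Gamma)$}. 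The implication (ii)$\Rightarrow$(i) is then immediate: for $z\in Z(G)$, Schur's lemma forces $z$ to act on the absolutely irreducible $V$ by a scalar, hence trivially on $\mathrm{ad}^0V$, so any constituent $N$ of $\mathrm{ad}^0V$ has $Z(G)$ acting trivially, i.e.\ $\phi|_{Z(G)}$ is trivial.

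For the converse (i)$\Rightarrow$(ii) I would argue as follows. Under (i), $N$ is inflated from a two-dimensional absolutely irreducible module for $\overline G:=G/Z(G)$, a non-cyclic finite subgroup of $PGl_2(\mathbb{C})$ — dihedral $D_m$, or $A_4,S_4,A_5$ — and since $A_4$, $A_5$ and $D_2$ have no two-dimensional irreducible, only $\overline G=S_4$ or $\overline G=D_m$ with $m\ge3$ remain; I may compute with ordinary characters since $\mathbb{F}_pG$ is split semisimple. If $\overline G=S_4$, then $N$ is inflated from the unique two-dimensional irreducible $W$ of $S_4$ (the one factoring through $S_4\twoheadrightarrow S_3$), and taking $V$ to be the inflation of $W$ works: $\mathrm{ad}^0W\cong W\oplus\mathrm{sgn}$, because $\mathrm{Sym}^2W\cong\mathbf 1\oplus W$, $\det W=\mathrm{sgn}$, and $W\otimes\mathrm{sgn}\cong W$. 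If $\overline G=D_m$, then $G$ is imprimitive, hence monomial: $G=\langle D,w\rangle$ with $D$ the diagonal part, abelian of index $2$, $Z(G)=D^{w}$, and $D/D^{w}\cong C_m$ with $w$ acting by inversion. By Clifford theory the two-dimensional absolutely irreducible $\mathbb{F}_pG$-modules are precisely the $\mathrm{Ind}_D^G\lambda$ with ${}^{w}\lambda\ne\lambda$, and restricting $\mathrm{Ind}_D^G\lambda|_D=\lambda\oplus{}^{w}\lambda$ gives
\[
\mathrm{ad}^0\,\mathrm{Ind}_D^G\lambda\;\cong\;\psi_\lambda\ \oplus\ \mathrm{Ind}_D^G\!\bigl(\lambda\cdot{}^{w}\lambda^{-1}\bigr)
\]
for a one-dimensional $\psi_\lambda$, provided $\lambda\cdot{}^{w}\lambda^{-1}$ has order $>2$. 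Writing $N=\mathrm{Ind}_D^G\mu$: (i) says $\mu|_{D^{w}}=1$, so $\mu$ factors through $D/D^{w}\cong C_m$ on which $w$ inverts, forcing ${}^{w}\mu=\mu^{-1}$; combined with ${}^{w}\mu\ne\mu$ this makes $\mu$ of order $>2$. Finally $\mu|_{D^{w}}=1$ places $\mu$ in $(D^{w})^{\perp}=\mathrm{image}\bigl(1-w\colon\widehat D\to\widehat D\bigr)$, so $\lambda\cdot{}^{w}\lambda^{-1}=\mu$ is solvable; for such a $\lambda$, the inflation of $V:=\mathrm{Ind}_D^G\lambda$ lies in $\Sigma(\Gamma)$ and $N$ is a constituent of $\mathrm{ad}^0V$. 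This gives (ii).

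The main obstacle is this last step: it needs the classification of finite irreducible subgroups of $Gl_2(\mathbb{C})$ (so as to know that a dihedral image forces the monomial form, and that $S_4$ is the only polyhedral quotient with a two-dimensional irreducible), followed by the induced-representation bookkeeping in the monomial case — notably verifying that $\mathrm{Ind}_D^G(\lambda\cdot{}^{w}\lambda^{-1})$ really occurs as a $G$-summand of $\mathrm{ad}^0\,\mathrm{Ind}_D^G\lambda$, and that the order-$\le2$ degenerate cases (where $\mathrm{ad}^0$ decomposes into characters) do not interfere. The cohomological reduction and the Schur argument are routine by comparison.
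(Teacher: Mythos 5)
Your proposal is correct, and its skeleton matches the paper's: both reduce (ii) to the nonvanishing of $\mathrm{H}^1(\Gamma,\mathrm{Hom}_{\mathbb{F}_p}(V,V))$ via Mazur's tangent-space description together with the trivial lift coming from semisimplicity of $\mathbb{F}_pG$, identify this via Lyndon--Hochschild--Serre with $\mathrm{Hom}_{\mathbb{F}_pG}(N,\mathrm{ad}\,V)$, dispose of (ii)$\Rightarrow$(i) by Schur's lemma (the paper simply cites its earlier Cor.~3.4 for this), and then run a case analysis over the projective image $H=G/Z(G)\in\{\text{cyclic},D_m,A_4,S_4,A_5\}$, with the cyclic, $A_4$, $A_5$ cases vacuous and the $S_4$ case settled by $\mathrm{ad}^0W\cong W\oplus\mathrm{sgn}$. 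Where you genuinely diverge is the dihedral case, which is the technical heart of the paper. The paper (Lemmas \ref{N(D)}, \ref{identity}, \ref{every}) fixes a presentation $\langle X,Y,Z\rangle,(\alpha,\beta,\gamma)$ of the central extension, realizes it by explicit diagonal/skew-diagonal matrices, and exhibits the required representation $\rho_\ell$ by writing down matrices and verifying the relations through exponent bookkeeping; it also splits off the $n$ odd and $m$ odd subcases separately, deferring the former to \cite{meyer}. You instead use Clifford theory for the abelian index-two subgroup $D$: the projection formula gives $\mathrm{ad}^0\,\mathrm{Ind}_D^G\lambda\cong\epsilon\oplus\mathrm{Ind}_D^G(\lambda\cdot{}^w\lambda^{-1})$, and the solvability of $\lambda\cdot{}^w\lambda^{-1}=\mu$ is exactly the duality statement $(D^w)^\perp=\mathrm{im}(1-w)$ on $\widehat D$ (which follows from the order count $|\widehat D^{\,w}|=|D_w|=|D^w|$ --- worth writing out, since it is the one place where the existence of the required $\lambda$ could fail). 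This buys you a coordinate-free argument that treats all parities of $n$ and $m$ uniformly and avoids both the presentation parameters and the appeal to \cite{meyer} for $n$ odd; the paper's explicit construction, on the other hand, is what later yields the character-orbit and kernel computations needed for Theorem \ref{Th2}, so it is not redundant in context. Two small points to tidy: the justification for the trivial lift should be that $p\nmid|G|$ lets $V$ lift to $\mathbb{Z}_pG$ and then inflate (not "maximal order" per se), and you should note explicitly that $\phi$ irreducible over the sufficiently large $\mathbb{F}_p$ is automatically absolutely irreducible, so that $\mathrm{Hom}_{\mathbb{F}_pG}(N,\mathrm{ad}\,V)\neq0$ is indeed equivalent to $N$ being a constituent.
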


\begin{theorem}
\label{Th2}
Let $G$ be a finite irreducible subgroup of  $Gl_2({\mathbb{C}})$.  Let $p$ be an odd prime such that ${\mathbb{F}}_p G$ is semisimple, and ${\mathbb{F}}_p$ is a sufficiently large field for $G$.   Let $\phi$ be an irreducible action of $G$ on $N = {\mathbb{Z}}/p{\mathbb{Z}}\times {\mathbb{Z}}/p{\mathbb{Z}}$, and let $\Gamma = {\Gamma}_{\phi}$ be the corresponding semidirect product.  Suppose that $\phi$ is trivial on the center of $G$.  Then one can determine the fusion of $N$ in $\Gamma$ from the set $\{ker(\rho) : \rho \in \Sigma(\Gamma) \textrm{ with } R(\Gamma,V_{\rho}) \ncong \mathbb{Z}_p\}$.
\end{theorem}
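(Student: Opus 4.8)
The plan is to reduce the statement via the Lyndon--Hochschild--Serre spectral sequence to a question about $G$-submodule structure of adjoint representations, and then to show that the data in the given set already pins down the subgroup $C_{\Gamma}(N)$, from which the fusion is immediate.

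\emph{Step 1: which $\rho$ occur, and what their kernels look like.} Fix $V = V_{\rho} \in \Sigma(\Gamma)$ with defining map $\rho\colon \Gamma \to GL_2(\mathbb{F}_p)$, and write $\mathrm{ad}\,\rho = \mathrm{Hom}_{\mathbb{F}_p}(V,V)$ with $\Gamma$ acting by conjugation. Since $N$ is a normal $p$-subgroup of $\Gamma$ and $V$ is a nonzero $\mathbb{F}_p$-space in characteristic $p$, the fixed space $V^{N}$ is a nonzero $\Gamma$-submodule, hence all of $V$; so $N \le \ker\rho$, $\rho$ is inflated from an absolutely irreducible $\bar\rho\colon G \to GL_2(\mathbb{F}_p)$, and using the chosen complement $\ker\rho = N \rtimes \ker\bar\rho$. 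Because $p \nmid |G|$, the spectral sequence of $1 \to N \to \Gamma \to G \to 1$ degenerates and $H^{i}(\Gamma,\mathrm{ad}\,\rho) \cong H^{i}(N,\mathrm{ad}\,\rho)^{G}$ for all $i$; as $N$ acts trivially on $\mathrm{ad}\,\rho$ this gives $H^{1}(\Gamma,\mathrm{ad}\,\rho) \cong \mathrm{Hom}_{G}(N,\mathrm{ad}\,\rho)$, and splitting off the trace ($p$ odd) together with $\mathrm{Hom}_{G}(N,\mathbb{F}_p)=0$ ($\phi$ irreducible of dimension $2$) yields $H^{1}(\Gamma,\mathrm{ad}\,\rho) \cong \mathrm{Hom}_{G}(N,\mathrm{ad}^{0}\bar\rho)$. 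Since $V$ lifts to $\mathbb{Z}_p$ (as $p \nmid |G|$) and $\mathrm{End}_{\mathbb{F}_p\Gamma}(V) = \mathbb{F}_p$, the ring $R(\Gamma,V)$ exists and equals $\mathbb{Z}_p$ exactly when this $H^{1}$ vanishes. Hence the $\rho$ entering our set are precisely those for which $\phi$ is a $G$-direct summand of $\mathrm{ad}^{0}\bar\rho$, and this set is nonempty by Theorem~\ref{Th1}.

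\emph{Step 2: recovering $C_{\Gamma}(N)$.} I would show that for every such $\rho$ one has $\{g \in G : \bar\rho(g) \text{ is a scalar}\} = \ker\phi$. On one hand $\mathrm{ad}^{0}\bar\rho$ is three--dimensional with trivial determinant (from $\bar\rho \otimes \bar\rho^{\ast} \cong \mathbf{1}\oplus \mathrm{ad}^{0}\bar\rho$), so once it contains the irreducible $\phi$ it must split as $\mathrm{ad}^{0}\bar\rho \cong \phi \oplus \eta$ with $\eta$ a character satisfying $\eta = (\det\phi)^{-1}$, whence $\ker\eta \supseteq \ker\phi$ and $g$ acts trivially on $\mathrm{ad}^{0}\bar\rho$ if and only if $g \in \ker\phi$. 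On the other hand $g$ acts trivially on $\mathrm{ad}^{0}\bar\rho$ if and only if $\bar\rho(g)$ centralizes $M_2(\mathbb{F}_p)$, i.e.\ is scalar. By Schur's lemma applied to the absolutely irreducible $\bar\rho$, the subgroup $\{g : \bar\rho(g)\ \text{scalar}\}/\ker\bar\rho$ is $Z(\bar\rho(G)) = Z(\Gamma/\ker\rho)$, so its preimage in $\Gamma$ is $N \rtimes \ker\phi = C_{\Gamma}(N)$. Thus from any single member $K$ of the given set, $C_{\Gamma}(N)$ is recovered as the preimage in $\Gamma$ of $Z(\Gamma/K)$; in particular every member yields the same subgroup $C_{\Gamma}(N)$.

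\emph{Step 3: conclusion.} Now $N = O_{p}(\Gamma)$ is intrinsic, and $\overline{G} := \Gamma/C_{\Gamma}(N)$ acts faithfully on $N$ by conjugation, realizing it inside $\mathrm{Aut}(N) = GL_2(\mathbb{F}_p)$. A direct computation in the semidirect product shows that two elements of $N$ are $\Gamma$-conjugate iff they lie in one $\overline{G}$-orbit, and, $N$ being abelian, no two distinct elements of $N$ are $N$-conjugate; hence the fusion of $N$ in $\Gamma$ is exactly the partition of $N$ into $\overline{G}$-orbits, now determined from the given set. The step I expect to be the real work is the cohomological reduction in Step 1 together with the splitting $\mathrm{ad}^{0}\bar\rho \cong \phi \oplus \eta$ and its kernel analysis in Step 2 --- everything after that is formal --- although the cohomological input is presumably already available from the proof of Theorem~\ref{Th1}.
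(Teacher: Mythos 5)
Your Steps 1 and 2 are essentially sound, and they give a cleaner, more uniform route than the paper to the intermediate conclusion they actually establish: from the adjoint decomposition $\mathrm{ad}^{0}\bar\rho \cong \phi \oplus (\det\phi)^{-1}$ one does recover $\ker\phi$ (equivalently $C_{\Gamma}(N) = N \rtimes \ker\phi$) as the preimage of the scalar part of $\bar\rho(G)$, and this replaces the paper's explicit linear-diophantine bookkeeping in Propositions \ref{diophantine} and \ref{converse} for extracting $(n,\ell)$ from the kernels. The gap is in Step 3. The theorem is only non-vacuous if the conjugation action of $\Gamma$ on $N$ is \emph{not} part of the given data: if you are handed $\Gamma$ with its multiplication, then $\Gamma$-conjugacy in $N$ is already known and the set of kernels is irrelevant. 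What the paper intends (see the correspondences in Section \ref{groups} and Propositions \ref{diophantine}, \ref{converse}) is that $G$ is fixed, $\phi$ is unknown, and the partition of $N$ into fusion classes must be read off from the kernel set alone. Your final sentence, ``the fusion of $N$ in $\Gamma$ is exactly the partition of $N$ into $\overline{G}$-orbits, now determined from the given set,'' is therefore circular: you have determined $\overline{G} = G/\ker\phi$ as an abstract group (and as a subquotient of $G$), but its faithful embedding into $\mathrm{Aut}(N) = GL_2(\mathbb{F}_p)$ is exactly the unknown $\phi$, not something recoverable from $C_{\Gamma}(N)$ alone.

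What is missing is the statement that any two faithful irreducible two-dimensional actions of $G/\ker\phi$ on $N$ induce the \emph{same} orbit partition of $N$ --- equivalently, that the fusion depends only on $\ker\phi$, i.e.\ only on $(n,\ell)$. This is true but is precisely the non-formal content of the paper's Proposition \ref{converse} (quoting \cite[Prop.~4.8]{meyer}): the orbits of $\theta_{\ell}$ on $\mathbb{F}_p^2$ are computed explicitly and shown to depend only on the subgroup $\langle \omega^{\ell}\rangle = \langle \omega^{(n,\ell)}\rangle$, so that all $\theta_k$ with $(k,d)=1$ on the quotient $D_{2d}$ partition $N$ identically. Without this orbit computation (or an equivalent argument that the various faithful actions differ by an automorphism of $N$ preserving the partition), your proof establishes only that the kernel set determines $\ker\phi$, not that it determines the fusion. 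Supplying that one lemma would complete your argument and would still be a genuinely different, and arguably tidier, proof than the paper's.
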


Theorem \ref{Th1} says that the function 
$$\Sigma({\Gamma}_{\phi}) \to \{ \textrm{local rings} \}, \textrm{ which sends }V \to R({\Gamma}_{\phi},V)$$
is nonconstant in this context exactly when the representation $\phi$ is trivial on the center of $G$.  The necessity of this condition was established in \cite[Cor. 3.4]{meyer}.  Theorem \ref{Th2} says that when the function $V \to R({\Gamma}_{\phi},V)$ is not trivial, knowledge of its graph can be used to determine the  fusion of $N$ in $\Gamma$.  Both Theorems \ref{Th1} and \ref{Th2} generalize the results in the analysis of dihedral groups in \cite{meyer}.

This paper is organized as follows.  In section \ref{s:prelim} we recall the basic definitions of deformations and deformation rings.  In section \ref{groups} we make preliminary statements about groups which admit faithful two-dimensional irreducible complex representations.  In sections \ref{dihedral} and section \ref{udr} we focus on those finite subgroups of $Gl_2({\mathbb{C}})$ which are central extensions of dihedral groups by cyclic groups.  Then, in section \ref{proof} we prove our main results.  

This paper is a continuation of work begun in my dissertation at the University of Iowa under the supervision of Professor Frauke Bleher.  I would like to thank her for all of her advice and guidance.
\section{Preliminaries on universal deformation rings}
\label{s:prelim}

In this section, we give a brief introduction to deformations and universal deformation rings.  We focus specifically on the perfect prime field ${\mathbb{F}}_p$.  For more background material, we refer the reader to \cite{mazur} and \cite{desmit-lenstra}.

Let  $p$ be an odd prime, $\mathbb{F}_p$ be the field with $p$ elements, and let $\mathbb{Z}_p$ denote the ring of $p$-adic integers.  Let $\hat{\mathcal{C}}$ be the category whose objects are tuples $(R, {\pi}_R)$, where $R$ is a complete local commutative Noetherian rings with residue field 
$\mathbb{F}_p$, and ${\pi}_R$ is a projection to the residue field. The morphisms in $\hat{\mathcal{C}}$ are local homomorphisms of local rings that induce the identity map on $\mathbb{F}_p$.

Suppose $\Gamma$ is a finite group and $V$ is a finitely generated $\mathbb{F}_p\Gamma$-module. 
A lift of $V$ over an object $R$ in $\hat{\mathcal{C}}$ is a pair $(M,\phi)$ where $M$ is a finitely generated 
$R\Gamma$-module that is free over $R$, and $\phi$ is an isomorphism of $\mathbb{F}_p\Gamma$-modules $$\mathbb{F}_p\otimes_R M\xrightarrow{\phi} V.$$
Two lifts $(M,\phi)$ and $(M',\phi')$ of $V$ over $R$ are isomorphic if there is an $R\Gamma$-module isomorphism $M\xrightarrow{\alpha} M'$ with $\phi=\phi'\circ (\mathrm{id}_{\mathbb{F}_p}\otimes\alpha)$. The 
isomorphism class $[M,\phi]$ of a lift $(M,\phi)$ of $V$ over $R$ is called a deformation of $V$ over $R$, and 
the set of such deformations is denoted by $\mathrm{Def}_\Gamma(V,R)$. The deformation functor is the functor
$\hat{\mathcal{C}} \xrightarrow{\hat{F}_V} \mathrm{Sets}$, which sends an object 
$$R \in \hat{\mathcal{C}} \xrightarrow{\hat{F}_V} \mathrm{Def}_\Gamma(V,R) \in \textrm{Sets}$$
and a morphism 
$$[R\xrightarrow{f} R']\in {\textrm{Hom}}_{\hat{\mathcal{C}}}(R,R') \xrightarrow{\hat{F}_V} \textrm{ the function } [[M,\phi]\xrightarrow{{\hat{F}_V}f} [R'\otimes_{R,f} M,\phi']].$$ 
In the above, the map $\mathrm{Def}_\Gamma(V,R) \xrightarrow{{\hat{F}_V}f} \mathrm{Def}_\Gamma(V,R')$ is given by 
$$[M,\phi]\to [R'\otimes_{R,f} M,\phi'],$$
where  $\phi'=\phi$ after identifying 
$\mathbb{F}_p\otimes_{R'}(R'\otimes_{R,f} M)$ with $\mathbb{F}_p\otimes_R M$.\\

If there exists an object $R(\Gamma,V)$ in $\hat{\mathcal{C}}$ and a deformation $[U(\Gamma,V),\phi_U]$ of 
$V$ over $R(\Gamma,V)$ such that for each $R$ in $\hat{\mathcal{C}}$, and for each lift $(M,\phi)$ of $V$ over 
$R$ there is a unique morphism $R(\Gamma,V)\xrightarrow{\alpha} R$ in $\hat{\mathcal{C}}$ such that 
$$\hat{F}_V(\alpha)([U(\Gamma,V),\phi_U])=[M,\phi],$$
then we call $R(\Gamma,V)$ the universal deformation 
ring of $V$ and $[U(\Gamma,V),\phi_U]$  the universal deformation of $V$. 

In other words, $R(\Gamma,V)$ represents the functor $\hat{F}_V$ in the sense that $\hat{F}_V$ is naturally isomorphic to 
$\mathrm{Hom}_{\hat{\mathcal{C}}}(R(\Gamma,V),-)$. In the case when the morphism 
$R(\Gamma,V)\xrightarrow{\alpha} R$ relative to the lift $(M,\phi)$ of $V$ over $R$ is only known to be unique if 
$R$ is the ring of dual numbers over $\mathbb{F}_p$ but may be not unique for other $R$, 
$R(\Gamma,V)$ is called the versal deformation ring of $V$ and 
$[U(\Gamma,V),\phi_U]$ is called the versal deformation of $V$.

By \cite{mazur}, every finitely generated ${\mathbb{F}}_p\Gamma$-module $V$ has a versal deformation ring $R(\Gamma,V)$. 
Moreover, if $V$ is an absolutely irreducible $\mathbb{F}_p\Gamma$-module, then $R(\Gamma,V)$ is
universal.

The following result of Mazur shows the connection between the structure of $R(\Gamma,V)$ and certain first and second cohomology
groups $\charg{i}{\Gamma}{\mathrm{Hom}_{\mathbb{F}_p}(V,V))}$, for $i = 1, 2.$

\begin{theorem} {\rm (\cite[\S1.6]{mazur}, \cite[Thm. 2.4]{bockle})}
\label{thm:udr}
Suppose $V$ is an absolutely irreducible $\mathbb{F}_p\Gamma$-module, and let
$d^i_V=\mathrm{dim}_{\mathbb{F}_p}\mathrm{H}^i(\Gamma,\mathrm{Hom}_{\mathbb{F}_p}(V,V))$ 
for $i=1,2$. Then $R(\Gamma,V)$ is isomorphic to 
a quotient algebra $\mathbb{Z}_p[[t_1,\ldots,t_r]]/J$ where $r=d^1_V$ and $d^2_V$ is an upper bound 
on the minimal number of generators of $J$.
\end{theorem}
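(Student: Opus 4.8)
The plan is as follows. The existence of a versal ring $R(\Gamma,V)$, and its universality when $V$ is absolutely irreducible, have already been recalled, so it suffices to pin down the two numerical invariants in the presentation: (a) the tangent space of the deformation functor will control the number of power-series variables, and (b) an $\mathrm{H}^2$-valued obstruction calculus will control the number of relations. For (a), fix a representation $\rho_0\colon\Gamma\to\mathrm{Aut}_{\mathbb{F}_p}(V)$ realizing $V$. Every lift of $V$ over the dual numbers $\mathbb{F}_p[\epsilon]/(\epsilon^2)$ can be written $g\mapsto(\mathrm{id}+\epsilon\,c(g))\,\rho_0(g)$ for some function $c\colon\Gamma\to\mathrm{Hom}_{\mathbb{F}_p}(V,V)$, the requirement that this assignment be a group homomorphism is exactly the $1$-cocycle condition for $c$ relative to the conjugation action of $\Gamma$ on $\mathrm{Hom}_{\mathbb{F}_p}(V,V)$, and two such lifts are isomorphic precisely when their cocycles differ by a coboundary. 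Hence $\hat{F}_V(\mathbb{F}_p[\epsilon]/(\epsilon^2))\cong\mathrm{H}^1(\Gamma,\mathrm{Hom}_{\mathbb{F}_p}(V,V))$, which is finite-dimensional since $\Gamma$ is finite. Because the universal property identifies this space with the $\mathbb{F}_p$-dual of the relative cotangent space $\mathfrak{m}_{R(\Gamma,V)}/(\mathfrak{m}_{R(\Gamma,V)}^2+p\,R(\Gamma,V))$, a choice of basis yields a surjection $A:=\mathbb{Z}_p[[t_1,\dots,t_r]]\twoheadrightarrow R(\Gamma,V)$ with $r=d^1_V$ that is an isomorphism on relative cotangent spaces; writing $J$ for its kernel, this last fact forces $J\subseteq\mathfrak{m}_A^2+pA$.

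For (b), I would consider a small extension $0\to I\to\tilde{R}\xrightarrow{\pi}R\to0$ in $\hat{\mathcal{C}}$ with $\mathfrak{m}_{\tilde{R}}I=0$ (so $I$ is an $\mathbb{F}_p$-vector space) together with a deformation of $V$ over $R$ presented by $\rho\colon\Gamma\to\mathrm{Aut}_R(M)$. Choosing a free $\tilde{R}$-module $\tilde{M}$ lifting $M$ and a continuous set-theoretic lift $\tilde{\rho}$ of $\rho$ to $\mathrm{Aut}_{\tilde{R}}(\tilde{M})$, the $2$-cochain $o(g,h):=\tilde{\rho}(g)\tilde{\rho}(h)\tilde{\rho}(gh)^{-1}-\mathrm{id}$ takes values in $\mathrm{Hom}_{\mathbb{F}_p}(V,V)\otimes_{\mathbb{F}_p}I$, is a $2$-cocycle, and its class $\omega\in\mathrm{H}^2(\Gamma,\mathrm{Hom}_{\mathbb{F}_p}(V,V))\otimes_{\mathbb{F}_p}I$ depends only on the deformation, vanishes if and only if the deformation lifts over $\tilde{R}$, and is functorial in $I$. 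Applying this to the small extension $0\to J/\mathfrak{m}_AJ\to A/\mathfrak{m}_AJ\to R(\Gamma,V)\to0$ and to the universal deformation produces a class $\omega\in\mathrm{H}^2(\Gamma,\mathrm{Hom}_{\mathbb{F}_p}(V,V))\otimes_{\mathbb{F}_p}(J/\mathfrak{m}_AJ)$, equivalently a linear map $\theta\colon\mathrm{H}^2(\Gamma,\mathrm{Hom}_{\mathbb{F}_p}(V,V))^{\vee}\to J/\mathfrak{m}_AJ$.

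The heart of the argument is to show $\theta$ is surjective. If it were not, I would pick an ideal $J'$ with $\mathfrak{m}_AJ\subseteq J'\subsetneq J$ and $J/J'\cong\mathbb{F}_p$ for which the image of $\theta$ lies in $J'/\mathfrak{m}_AJ$; then by functoriality the obstruction to lifting the universal deformation along the small extension $A/J'\to R(\Gamma,V)$ is the image of $\omega$ in $\mathrm{H}^2(\Gamma,\mathrm{Hom}_{\mathbb{F}_p}(V,V))\otimes_{\mathbb{F}_p}(J/J')$, which vanishes, so the universal deformation lifts to $A/J'$, and by universality this lift is induced by a section $s\colon R(\Gamma,V)\to A/J'$ of the projection $A/J'\to R(\Gamma,V)$. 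Composing $s$ with $A\twoheadrightarrow R(\Gamma,V)$ gives a $\mathbb{Z}_p$-algebra map $A\to A/J'$ lifting the canonical surjection, and its difference with the canonical surjection $A\twoheadrightarrow A/J'$ is a $\mathbb{Z}_p$-derivation $A\to J/J'\cong\mathbb{F}_p$; such a derivation kills $\mathfrak{m}_A^2+pA$ and hence kills $J$, yet on $J$ it coincides with the surjection $J\twoheadrightarrow J/J'$, forcing $J'=J$ --- a contradiction. Once $\theta$ is surjective, $\dim_{\mathbb{F}_p}(J/\mathfrak{m}_AJ)\le\dim_{\mathbb{F}_p}\mathrm{H}^2(\Gamma,\mathrm{Hom}_{\mathbb{F}_p}(V,V))=d^2_V$, so Nakayama's lemma shows $J$ is generated by at most $d^2_V$ elements, which is the assertion. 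I expect the surjectivity of $\theta$ to be the main obstacle: this is precisely the step where the universal (rather than merely versal) property is essential, and where one must carefully match the obstruction-theoretic and ring-theoretic descriptions --- in particular to justify the derivation argument above.
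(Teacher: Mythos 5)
The paper does not prove this statement---it is quoted directly from Mazur and B\"ockle---so there is no internal proof to compare against, and your argument is a correct, essentially complete reconstruction of the standard proof in those references. The two halves are exactly right: the $\mathrm{H}^1$ cocycle computation of the mod-$p$ tangent space yields the presentation with $r=d^1_V$ variables, and the obstruction class for the small extension $A/\mathfrak{m}_AJ\to R(\Gamma,V)$ combined with the section-plus-derivation argument (the step where universality, guaranteed here by absolute irreducibility, is genuinely needed) forces $\dim_{\mathbb{F}_p}(J/\mathfrak{m}_AJ)\le d^2_V$, so Nakayama gives the bound on the number of generators of $J$.
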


\section{Preliminaries on groups with faithful irreducible two-dimensional complex representations}
\label{groups}
In this section we discuss groups $G$ that admit a faithful two-dimensional irreducible complex representation.  These are exactly the isomorphism classes of irreducible finite subgroups of $Gl_2({\mathbb{C}})$.  Examples of such groups include $Sl_2({\mathbb{F}}_3), Sl_2({\mathbb{F}}_5)$, the binary octahedral group, and dihedral groups.  For the remainder of this paper, let $D_{2n}$ denote the dihedral group of order $2n \geq 6$. 

If $G$ is a finite irreducible subgroup of $Gl_2({\mathbb{C}})$, we associate to the  isomorphism class of $G$ an odd prime $p$ such that 
\begin{itemize}
\item[i.]
${\mathbb{F}}_p G$ is semisimple, and
\item[ii.]
${\mathbb{F}}_p$ is a sufficiently large field for $G$.
\end{itemize}

Clearly, $p = p(G)$.  In this situation, the irreducible representation $G \subseteq Gl_2({\mathbb{C}})$ is realizable over ${\mathbb{F}}_p$ and $G$ is a semisimple subgroup of $Gl_2({\mathbb{F}}_p)$.  We now use the classical result that semisimple subgroups of $Gl_2(k)$ for $k$ finite, are parametrized by their image in $PGl_2(k)$ \cite{lange}.  More precisely, if $\pi$ is the natural projection from $Gl_2({\mathbb{F}}_p) \xrightarrow{\pi} PGl_2({\mathbb{F}}_p)$ then,

$$\pi(G) =
\begin{cases}
D_{2n}, \textrm{ for some } n \\
{\mathbb{Z}}/m{\mathbb{Z}}, \textrm{ for some } m\\
A_4,
A_5, \textrm{ or }
S_4.
\end{cases} $$
Now, for any irreducible two-dimensional ${\mathbb{F}}_p$ representation of $G$ (not necessarily faithful), we set $\Gamma$ equal to the corresponding semidirect product $$\Gamma ={\Gamma}_{\phi} =  ({\mathbb{Z}}/p{\mathbb{Z}}\times {\mathbb{Z}}/p{\mathbb{Z}}) {\rtimes}_{\phi} G.$$  Equivalently, we may say $\Gamma$ is any group with a  normal elementary abelian rank two Sylow $p$-subgroup such that the corresponding quotient group satisfies the technical conditions with respect to ${\mathbb{F}}_p$ stated above.  We will sometimes say the fusion of $\phi$, for the fusion of $N = {\mathbb{Z}}/p{\mathbb{Z}}\times {\mathbb{Z}}/p{\mathbb{Z}}$ in ${\Gamma}_{\phi}$.  For any such $\Gamma$, we obtain the sequence,

$$\begin{tikzpicture}[description/.style={fill=white,inner sep=2pt}]
\matrix (m) [matrix of math nodes, row sep=3em,
column sep=2.5em, text height=1.5ex, text depth=0.25ex]
{  0 & {\mathbb{Z}}/p{\mathbb{Z}}\times {\mathbb{Z}}/p{\mathbb{Z}} & \Gamma & G & 1\\ 
{} & {} & {} & H & {}\\ };
\path[->,font=\scriptsize]

(m-1-1) edge node[auto] {} (m-1-2)
(m-1-2) edge node[auto] {$\iota$} (m-1-3)
(m-1-3) edge node[auto] {$\pi$} (m-1-4)
(m-1-4) edge node[auto] {} (m-1-5)
(m-1-4) edge [bend left = 60] node [above] {$\phi$} (m-1-2)
(m-1-4) edge node[auto] {$\pi$} (m-2-4);
\end{tikzpicture}$$

\noindent
where $\pi$ is the natural projection into $PGl_2({\mathbb{F}}_p)$, and $H$ is cyclic, dihedral, or one of $A_4, A_5, $or $S_4$.  Since the center of $Gl_2({\mathbb{F}}_p)$ is cyclic, we have that any such $G$ is a central extension of $H$ by a cyclic group.  Thus, we have the diagram

$$\begin{tikzpicture}[description/.style={fill=white,inner sep=2pt}]
\matrix (m) [matrix of math nodes, row sep=3em,
column sep=2.5em, text height=1.5ex, text depth=0.25ex]
{ {} & {} & {} & 0 & {}\\ {} & {} & {} & {\mathbb{Z}}/m{\mathbb{Z}} & {}\\ 0 & {\mathbb{Z}}/p{\mathbb{Z}}\times {\mathbb{Z}}/p{\mathbb{Z}} & \Gamma & G & 1\\ 
{} & {} & {} & H & {}\\ {} & {} & {} & 0 & {}\\ };
\path[->,font=\scriptsize]

(m-2-4) edge node[auto] {$\iota$} (m-3-4)
(m-3-1) edge node[auto] {} (m-3-2)
(m-3-2) edge node[auto] {$\iota$} (m-3-3)
(m-3-3) edge node[auto] {$\pi$} (m-3-4)
(m-3-4) edge node[auto] {} (m-3-5)
(m-3-4) edge [bend left = 60] node [above] {$\phi$} (m-3-2)
(m-3-4) edge node[auto] {$\pi$} (m-4-4)
(m-1-4) edge node[auto] {} (m-2-4)
(m-4-4) edge node[auto] {} (m-5-4);
\end{tikzpicture}$$

where $m$ divides $ p-1$.  We point out that a central extension of $H$ by a cyclic group need not be isomorphic to a finite irreducible subgroup of $Gl_2({\mathbb{C}})$.
Fix $\Gamma = {\Gamma}_{\phi}$, and $p$ prime.  Let $\Sigma = \Sigma(\Gamma)$ denote the collection of isomorphism classes of absolutely irreducible two-dimensional ${\mathbb{F}}_p \Gamma$- modules.  Note that every absolutely irreducible ${\mathbb{F}}_p \Gamma$- module is an irreducible ${\mathbb{F}}_p G$-module inflated to $\Gamma$.  We will consider the function 
$$\Sigma \to \hat{\mathcal{C}}, \textrm{ which sends } V \to R(\Gamma,V).$$
Since $\phi$ is irreducible, if there exists a $V \in \Sigma(\Gamma)$ with  $R(\Gamma,V) \ncong {\mathbb{Z}}_p$, then $\phi$ must be trivial on the center of $G$ \cite[Corollary 3.4]{meyer}.  By \cite[Appendix by Feit]{lange}, $\pi(G)$ is dihedral or cyclic exactly when $G$ is contained in the normalizer of some Cartan subgroup $C$ of $Gl_2({\mathbb{F}}_p)$.  The cyclic case corresponds to $G \subseteq C$, and the dihedral case to $G \subseteq N(C), G \not\subseteq C$.  By our hypotheses on $G, p,$ and $\phi$, $G$ cannot be abelian, so $G$ may not be inside a Cartan subgroup (though an analysis on the connections between fusion and universal deformation rings for finite abelian groups was conducted in \cite[sec 4.6]{meyer}).  Because ${\mathbb{F}}_p$ is sufficiently large for $G$, we need only consider split Cartan subgroups if $\pi(G)$ is dihedral.  By Theorem \ref{thm:udr} \cite{mazur} to show $R(\Gamma,V) \ncong {\mathbb{Z}}_p$ it is sufficient to demonstrate that $\charg{1}{\Gamma}{\mathrm{Hom}_{\mathbb{F}_p}(V,V))}$ is not trivial.  We will use the  results of \cite{meyer} on group cohomology and universal deformation rings.

We will show first that the function $V \to R(\Gamma,V)$ is not the constant function $V \to {\mathbb{Z}}_p$  if and only if the representation $\phi$ is trivial on the center of $G$.  Moreover, we will show that when $\phi$ is trivial on $Z(G)$, knowledge of where this function takes values different from ${\mathbb{Z}}_p$ can be used to detect the fusion of ${\mathbb{Z}}/p{\mathbb{Z}}\times {\mathbb{Z}}/p{\mathbb{Z}} $ in $\Gamma$.  
That is, when the function $V \to R(\Gamma,V)$ is nonconstant, we obtain the correspondence

$$\textrm{ Fusion of } \phi \leftrightsquigarrow \{ker(\rho) : \rho \in \Sigma \textrm{ and }R(\Gamma,V_{\rho}) \ncong \mathbb{Z}_p \}.$$

The correspondence above means that given the fusion of $\phi$ one can determine the kernels of those representations whose corresponding ${\mathbb{F}}_p {\Gamma}_{\phi}$-modules have universal deformation ring different from the $p$-adic integers.  Moreover, this assignment is reversible.  Given the collection of kernels of representations whose corresponding modules have universal deformation ring not isomorphic to the $p$-adic integers, one knows the fusion of $\phi$. 

These results generalize the results in \cite{meyer}, though we suppress the explicit mention of cohomologically maximal modules.  The finite irreducible subgroup of $Gl_2({\mathbb{C}})$ which are central extensions of dihedral groups by cyclic groups play a particularly important role.  We point out that in \cite{meyer} the analysis was aided by the fact that the representations of dihedral groups are, of course, well known.  In contrast, while central extensions of dihedral groups by cyclic groups which admit faithful irreducible two-dimensional complex representations, can all be viewed as extensions, even their isomorphism classes are not readily accessible.  In the argument that follows we perform our analysis on fusion and universal deformation rings without the a priori knowledge of the representation theory of the groups. From a careful reading of \cite{meyer}, one can see the following correspondences are also obtained

\begin{align*}
\textrm{ Fusion of } \phi &\leftrightsquigarrow \{ker(\rho) : \rho \in \Sigma \textrm{ and }R(\Gamma,V_{\rho}) \cong {\mathbb{Z}}_{p}[[t]]/(t^2,pt)\}\\
\textrm{ Fusion of } \phi &\leftrightsquigarrow \{ker(\rho) : \rho \in \Sigma \textrm{ and } {\textrm{dim}}_{{\mathbb{F}}_p}(\charg{2}{\Gamma}{\textrm{Hom}_{\mathbb{F}_{p}}(V,V)}) = 2\}\\
\textrm{ Fusion of } \phi &\leftrightsquigarrow \{ker(\rho) : \rho \in \Sigma \textrm{ and } {\textrm{dim}}_{{\mathbb{F}}_p}(\charg{1}{\Gamma}{\textrm{Hom}_{\mathbb{F}_{p}}(V,V)}) = 1\}.
\end{align*}
\section{Central extensions of Dihedral groups by cyclic groups}
\label{dihedral}

In this section, we consider the case where $G$ is a finite irreducible subgroup of $Gl_2({\mathbb{C}})$ which is a central extension of a dihedral group by a cyclic group.  This corresponds to the case in section \ref{groups}, where $\pi(G) = H = D_{2n}$.  Examples of such groups include dihedral groups, semi-dihedral groups, and generalized quaternion groups.  Clearly, any such group is either dihedral, or a central extension of a dihedral group by a cyclic group.  We point out that this necessary condition is not sufficient, as many central extensions of dihedral groups by cyclic groups do not admit a faithful two-dimensional complex representation.  

Let $G$ be a central extension of $D_{2n}$ by ${\mathbb{Z}}/m{\mathbb{Z}}$.  Clearly, $G$ has a presentation 
$$G= \langle X, Y, Z | X^n = Z^{\alpha}, Y^2 = Z^{\beta}, YX = X^{n-1} YZ^{\gamma}, YZ = ZY,  XZ = ZX \rangle $$

where $\alpha, \beta, \gamma $ are viewed modulo $m$.  For $G$ with the presentation above we will write $G$ is given by $\langle X,Y,Z\rangle , (\alpha, \beta, \gamma)$.

First, without any hypothesis on the representation theory of $G$, we can suppose $\beta = 0,$ or $\beta =1.$

\begin{lemma}
Let $G$ be a central extension of a dihedral group by a cyclic group, then $G$ is given by $\langle X, Y, Z \rangle , (\alpha, \beta, \gamma)$, where $\beta = 0$ or $\beta = 1$.
\end{lemma}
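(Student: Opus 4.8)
The plan is to normalize $\beta$ by a change of the generator $Y$, leaving $X$ and $Z$ untouched. Since $Z$ is central in $G$, for any integer $k$ the triple $X$, $Y' = YZ^{k}$, $Z$ still generates $G$, and one computes $Y'^{2} = YZ^{k}YZ^{k} = Y^{2}Z^{2k} = Z^{\beta + 2k}$, using that $Z$ is central. So after this substitution the relation $Y^{2} = Z^{\beta}$ becomes $Y'^{2} = Z^{\beta'}$ with $\beta' \equiv \beta + 2k \pmod{m}$, and it will remain to choose $k$ so that $\beta'$ falls in $\{0,1\}$.

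Next I would check that the other defining relations keep their shape under $Y \mapsto YZ^{k}$. The relations $X^{n} = Z^{\alpha}$ and $XZ = ZX$ do not involve $Y$, so they are unchanged. For the remaining relation, $Y'X = YZ^{k}X = YXZ^{k} = X^{n-1}YZ^{\gamma}Z^{k} = X^{n-1}YZ^{k}Z^{\gamma} = X^{n-1}Y'Z^{\gamma}$, so it survives verbatim, and in particular $\gamma$ is preserved; likewise $Y'Z = ZY'$. Hence for every $k$ the group $G$ is given by $\langle X, Y', Z\rangle, (\alpha, \beta', \gamma)$ with $\beta' = \beta + 2k$.

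Finally, one picks $k$ according to the parity of $m$. If $m$ is odd, then $2$ is a unit modulo $m$, so there is a $k$ with $\beta + 2k \equiv 0 \pmod{m}$, giving $\beta' = 0$. If $m$ is even, the residues $\beta + 2k \pmod{m}$ range exactly over the coset of the index-two subgroup $2({\mathbb{Z}}/m{\mathbb{Z}})$ determined by the parity of $\beta$; thus if $\beta$ is even one can reach $\beta' = 0$, and if $\beta$ is odd one can reach $\beta' = 1$. In all cases $G$ has a presentation of the stated form.

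I do not expect a genuine obstacle here: the argument is a single substitution computation plus an elementary parity case split. The only point needing a little care is the verification in the second paragraph that $Y \mapsto YZ^{k}$ leaves the relation $YX = X^{n-1}YZ^{\gamma}$ (and the commuting relations) intact, so that the normalized presentation is still of the advertised type.
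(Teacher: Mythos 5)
Your proposal is correct and follows essentially the same route as the paper: both normalize $\beta$ by replacing $Y$ with $Y$ times a power of the central element $Z$ (the paper writes this as an explicit isomorphism $Y' \mapsto YZ^{-\tau}$ with $\beta = 2\tau$, together with its inverse), and both resolve the case split according to whether $Z^{\beta}$ lies in $\langle Z^2\rangle$, i.e.\ according to the parity of $\beta$ when $m$ is even. Your framing as a change of generating set inside $G$, with the observation that the substitution is invertible via $Y = Y'Z^{-k}$, is an equivalent packaging of the paper's two-way homomorphism argument.
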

\begin{proof}
Let $G$ be a central extension of $D_{2n}$ by the cyclic group ${\mathbb{Z}}/m{\mathbb{Z}}$.  Then, $G$ is given by $\langle X,Y,Z \rangle, (\alpha, \beta, \gamma)$, for some $(\alpha, \beta, \gamma) $.  First suppose $Z^{\beta} \in \langle Z^2 \rangle$.  Let $G'$ be the central extension of $D_{2n}$ by ${\mathbb{Z}}/m{\mathbb{Z}}$  given by $\langle X', Y', Z' \rangle$ corresponding to the triple $(\alpha, 0, \gamma)$.  Let $\Phi$ be given by 
$$X' \xrightarrow{\Phi} X, Y' \xrightarrow{\Phi} YZ^{-\tau}, Z' \xrightarrow{\Phi} Z,$$
where $\tau$ satisfies $\beta = 2 \tau (\textrm{ mod } m) $.  Since the images of $X', Y', Z' $ under $\Phi$ satisfy the relevant relations, $\Phi$ defines a homomorphism from $H$ to $G$.  By inspection, $\Psi$ given by 
$$X \xrightarrow{\Psi} X', Y \xrightarrow{\Psi} (Y')({(Z')}^{\tau}, Z \xrightarrow{\Psi} Z'$$
is its inverse.  Similarly, when $Z^{\beta} \notin \langle Z^2 \rangle$ one shows $G \cong H = \langle X', Y', Z' \rangle$ corresponding to the triple $(\alpha, 1, \gamma)$.  Of course the latter case is only possible when $m$ is even.
\end{proof}
For those central extensions of dihedral groups by cyclic groups with faithful irreducible two-dimensional complex representations, we next show that considering two-dimensional irreducible representation is natural, as such groups have only one and two-dimensional irreducible complex representations. 
\begin{lemma}
Let $G$ is a central extension of a dihedral group by a cyclic group which admits a faithful irreducible two-dimensional complex representation.  Say $G$ given by $\langle X, Y, Z \rangle , (\alpha, \beta, \gamma)$ as above.  Then, any irredicble representation of $G$ is one or two-dimensional.  
\end{lemma}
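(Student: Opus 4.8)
The plan is to produce an abelian normal subgroup of index $2$ in $G$ and then quote It\^o's theorem on the degrees of irreducible characters. From the presentation $G=\langle X,Y,Z\mid X^{n}=Z^{\alpha},\,Y^{2}=Z^{\beta},\,YX=X^{n-1}YZ^{\gamma},\,YZ=ZY,\,XZ=ZX\rangle$, set $A=\langle X,Z\rangle$. Since $XZ=ZX$ and $Z$ is central, $A$ is abelian. It is normal: conjugation by $X$ and $Z$ fixes $A$, while $Y$ is central with $Z$ and, using that $Z$ is central, the relation $YX=X^{n-1}YZ^{\gamma}$ gives $YXY^{-1}=X^{n-1}Z^{\gamma}\in A$ and $YZY^{-1}=Z\in A$; hence $YAY^{-1}=A$, so also $Y^{-1}AY=A$, and since $G=\langle X,Y,Z\rangle$ we get $A\trianglelefteq G$. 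The quotient $G/A$ is generated by the image of $Y$, and $Y^{2}=Z^{\beta}\in A$, so $G/A$ is cyclic of order dividing $2$.

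Next I would pin down the index. Since $G$ admits a faithful irreducible two-dimensional complex representation it is nonabelian (an abelian group has only one-dimensional irreducible complex representations, and a two-dimensional one would split). If $[G:A]=1$ then $G=A$ would be abelian, a contradiction; hence $[G:A]=2$. (Note that the conclusion of the lemma also holds trivially in the excluded abelian case, so the faithfulness hypothesis is only being used to guarantee we are in the index-$2$ situation, which is the only one needing an argument.)

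Finally I would apply It\^o's theorem: if $A\trianglelefteq G$ is abelian, then the degree of every irreducible complex character of $G$ divides $[G:A]$. Here $[G:A]=2$, so every irreducible complex representation of $G$ has dimension $1$ or $2$. If one prefers to avoid citing It\^o, the same conclusion follows directly by Clifford theory: for $\rho$ irreducible, $\rho|_{A}$ is a sum of $G$-conjugates of a single one-dimensional character $\chi$ of $A$, all of equal multiplicity $e$, the number of distinct conjugates dividing $[G:A]=2$; if there are two conjugates then $\rho\cong\mathrm{Ind}_{A}^{G}\chi$ has dimension $2$, and if $\chi$ is $G$-invariant then, since the obstruction to extending it to $G$ lies in $H^{2}(G/A,\mathbb{C}^{\times})=H^{2}(\mathbb{Z}/2,\mathbb{C}^{\times})=0$, the character $\chi$ extends to $G$ and $\rho$ is such an extension twisted by a (one-dimensional) character of $G/A\cong\mathbb{Z}/2$, so $\dim\rho=1$.

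The argument is essentially routine; the only point requiring genuine care is checking that $A$ is normal of index exactly $2$, which is where the hypothesis that $G$ be nonabelian (supplied by the faithful irreducible two-dimensional representation) is used. Everything after that is a standard invocation of It\^o's theorem (or of the Clifford-theoretic analysis of an index-$2$ abelian normal subgroup).
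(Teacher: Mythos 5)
Your proof is correct and rests on the same key observation as the paper's: $\langle X,Z\rangle$ is an abelian subgroup of index $2$ in $G$, which bounds the irreducible degrees by $2$. The paper closes the argument by citing Lee's theorem (after inflating to $\Gamma$ and working over $\overline{\mathbb{F}}_p$), whereas you invoke It\^o's theorem or Clifford theory directly for $G$ over $\mathbb{C}$; this is only a difference in which standard result is quoted, not in the substance of the argument.
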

\begin{proof}
Let $\iota$ denote the faithful irreducible ${\mathbb{F}}_p$ representation, and let $\Gamma={\Gamma}_{\iota}$ be the corresponding semidirect product.  Clearly, $ N = {\mathbb{Z}}/p{\mathbb{Z}} \times {\mathbb{Z}}/p{\mathbb{Z}} $ is a normal Sylow $p$-subgroup of $\Gamma$.  Then,  $\Gamma/N = G$ has an abelian subgroup of index $2$ given by $\langle X,Z \rangle$.  Therefore, by \cite[Thm. 1]{lee} every irreducible representation of $\Gamma$ in the algebraic closure of ${\mathbb{F}}_p$,  $\overline{{\mathbb{F}}}_p$ has order $1$ or $2$.  Since ${\mathbb{F}}_p$ is sufficiently large, the same holds for ${\mathbb{F}}_p$.  Since an absolutely irreducible ${\mathbb{F}}_p \Gamma$- module is an irreducible ${\mathbb{F}}_p G$-module inflated to $\Gamma$, the result follows.
\end{proof}

We next show that if $G$ is a central extension of a dihedral group by a cyclic group which admits a faithful irreducible two-dimensional complex representation, then every irreducible two-dimensional ${\mathbb{F}}_p$ representation of $G$ can be taken into the normalizer of the diagonal in $Gl_2({\mathbb{F}}_p)$.  That is, for such a $G$, $\rho$ irreducible implies one can assume
$$\rho(G) \subseteq \left\{\begin{pmatrix} a & 0\\0 & b\end{pmatrix} | a, b \neq 0 \right\} \bigcup \left\{\begin{pmatrix} 0 & a\\ b & 0\end{pmatrix} | a, b \neq 0 \right\} = N(D). $$

\begin{lemma}
\label{N(D)}
Let $G$ be a central extension of a dihedral group by a cyclic group which admits a faithful irreducible two-dimensional complex representation, say $G$ is given by $\langle X, Y, Z \rangle , (\alpha, \beta, \gamma)$.  Let $p$ and ${\mathbb{F}}_p$ be as above.  Then, any irredicble two-dimensional ${\mathbb{F}}_p$ representation of $G$ is isomorphic as a representation to one with image in the subgroup of $Gl_2({\mathbb{F}}_p)$ consisting  of diagonal and skew diagonal matrices.   
\end{lemma}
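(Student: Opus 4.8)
The plan is to exploit the abelian subgroup $A = \langle X, Z \rangle$ of $G$. From the presentation this subgroup is visible: $X$ and $Z$ commute and $Z$ is central, so $A$ is abelian; moreover $YZY^{-1} = Z$ and the relation $YX = X^{n-1}YZ^{\gamma}$ give $YXY^{-1} = X^{n-1}Z^{\gamma} \in A$, so $A \trianglelefteq G$ with $G/A$ generated by the image of $Y$. Since $G$ maps onto $D_{2n}$ ($2n \geq 6$) it is nonabelian, hence $A \neq G$ and $[G:A] = 2$. Given an arbitrary irreducible two-dimensional ${\mathbb{F}}_p$-representation $\rho$ of $G$, I would first restrict to $A$: because $p \nmid |G|$ and ${\mathbb{F}}_p$ is sufficiently large for $G$ (in particular a splitting field for the abelian group $A$), the restriction $\rho|_A$ is a direct sum of two one-dimensional characters $\chi_1,\chi_2 \colon A \to {\mathbb{F}}_p^{\times}$. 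Fixing a basis $(v_1, v_2)$ of $A$-eigenvectors makes $\rho(X)$ and $\rho(Z)$ simultaneously diagonal.

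The next step is to rule out $\chi_1 = \chi_2$. If $\chi_1 = \chi_2$ then $\rho(A)$ consists of scalar matrices, so $\rho(Y)$ commutes with everything in $\rho(A)$ and $\rho(G) = \langle \rho(X),\rho(Z),\rho(Y)\rangle$ is abelian; a two-dimensional representation of a group with abelian image over a splitting field in which the group order is invertible decomposes, contradicting irreducibility of $\rho$. Hence $\chi_1 \neq \chi_2$, and therefore the two eigenlines $L_1 = {\mathbb{F}}_p v_1$ and $L_2 = {\mathbb{F}}_p v_2$ are the only $\rho(A)$-stable lines in the module. Since $A$ is normal, conjugation by $\rho(Y)$ sends $A$-stable lines to $A$-stable lines, so $\rho(Y)$ permutes $\{L_1, L_2\}$. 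If $\rho(Y)$ fixed both, then $L_1$ would be stable under all of $\rho(G)$, again contradicting irreducibility; hence $\rho(Y)$ interchanges $L_1$ and $L_2$, i.e. $\rho(Y)$ is skew-diagonal in the basis $(v_1, v_2)$. Consequently, after this change of basis, $\rho(X)$ and $\rho(Z)$ are diagonal and $\rho(Y)$ is skew-diagonal, so $\rho(G) \subseteq N(D)$, which is what the lemma asserts (replacing $\rho$ by an isomorphic representation amounts exactly to such a base change).

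There is essentially no serious obstacle here; the one point needing care is the degenerate case $\chi_1 = \chi_2$, which is precisely where irreducibility is used for the first time, and the argument above disposes of it by observing that a scalar restriction to $A$ makes the whole image abelian. One could alternatively organize the proof through Clifford theory applied to $A \trianglelefteq G$: either $\rho|_A$ is the sum of two distinct $G$-conjugate characters, which is the case producing the claimed $N(D)$-form, or $\rho|_A$ is $\chi$-isotypic and $\rho$ is essentially an extension of $\chi$ twisted by a representation of $G/A \cong {\mathbb{Z}}/2{\mathbb{Z}}$, and the latter possibilities are incompatible with $\rho$ being two-dimensional and irreducible. I find the direct eigenvector argument cleaner, and that is the route I would write up.
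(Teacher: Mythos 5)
Your proof is correct, but it takes a genuinely different route from the paper's. The paper argues externally: it invokes the classification of semisimple finite subgroups of $Gl_2({\mathbb{F}}_p)$ by their images in $PGl_2({\mathbb{F}}_p)$ (the appendix by Feit in Lang's book), first conjugating the faithful copy of $G$ into the normalizer of a split Cartan subgroup, and then, for an arbitrary irreducible $\rho$, passing to the induced map on $D_{2n}$ and ruling out a cyclic image in $PGl_2({\mathbb{F}}_p)$ because that would force $\rho(G)$ into the Cartan itself and make $\rho$ reducible. You instead argue internally via Clifford theory on the index-two abelian normal subgroup $A=\langle X,Z\rangle$: diagonalize $\rho|_A$ using semisimplicity and the splitting-field hypothesis, dispose of the isotypic case (scalar restriction forces abelian image, hence reducibility), and observe that $\rho(Y)$ must swap the two eigenlines. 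Your version is self-contained and elementary, makes explicit exactly where irreducibility and the largeness of ${\mathbb{F}}_p$ enter, and avoids any appeal to the Dickson--Feit classification; the paper's version has the advantage of reusing machinery already set up in Section \ref{groups} and of treating the cyclic-versus-dihedral dichotomy in the same language used throughout the rest of the paper. Both establish the same normal form, and your verification that $A$ is normal of index two from the presentation (via $YXY^{-1}=X^{n-1}Z^{\gamma}$) is complete.
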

\begin{proof}
Since  $G \subseteq Gl_2({\mathbb{F}}_p)$, with $\pi (G) \cong D_{2n}$, we begin with the short exact sequence below
$$\begin{tikzpicture}[description/.style={fill=white,inner sep=2pt}]
\matrix (m) [matrix of math nodes, row sep=3em,
column sep=2.5em, text height=1.5ex, text depth=0.25ex]
{ {} & {} & 0 & {} & {}\\ 0 & {\mathbb{Z}}/m{\mathbb{Z}} & G & D_{2n} & 1\\ 
{} & {} & Gl_{2}({\mathbb{F}}_p) & {} & {}\\ };
\path[->,font=\scriptsize]

(m-1-3) edge node[auto] {} (m-2-3)
(m-2-1) edge node[auto] {} (m-2-2)
(m-2-2) edge node[auto] {$\iota$} (m-2-3)
(m-2-3) edge node[auto] {$\pi$} (m-2-4)
(m-2-4) edge node[auto] {} (m-2-5)
(m-2-3) edge node[auto] {$\iota$} (m-3-3);
\end{tikzpicture}$$
By the classification of semisimple subgroups of $Gl_2({\mathbb{F}}_p)$ \cite[Appendix by Feit]{lange}, $G$ is contained in the normalizer of a Cartan subgroup, but not the Cartan subgroup.  Because ${\mathbb{F}}_p$ is sufficiently large, it must be a split Cartan subgroup (one conjugate to ${\mathbb{F}}_p^* \times {\mathbb{F}}_p^*$) .  Therefore, by conjugating we may assume $G \subseteq N(D)$.  Now, for any irreducible two-dimensional representation $\rho$, we obtain the morphism of sequences below.
$$\begin{tikzpicture}[description/.style={fill=white,inner sep=2pt}]
\matrix (m) [matrix of math nodes, row sep=3em,
column sep=2.5em, text height=1.5ex, text depth=0.25ex]
{ \\ 0 & {\mathbb{Z}}/m{\mathbb{Z}} & G & D_{2n} & 1\\ 
0 & {\mathbb{F}}_p^* & Gl_{2}({\mathbb{F}}_p) & PGl_{2}({\mathbb{F}}_p)  & 1 \\ };
\path[->,font=\scriptsize]

(m-2-1) edge node[auto] {} (m-2-2)
(m-2-2) edge node[auto] {$\iota$} (m-2-3)
(m-2-3) edge node[auto] {$\pi$} (m-2-4)
(m-2-4) edge node[auto] {} (m-2-5)
(m-2-3) edge node[auto] {$\rho$} (m-3-3)
(m-3-1) edge node[auto] {} (m-3-2)
(m-3-2) edge node[auto] {$\iota$} (m-3-3)
(m-3-3) edge node[auto] {$\pi$} (m-3-4)
(m-3-4) edge node[auto] {} (m-3-5)
(m-2-2) edge node[auto] {$\iota$} (m-3-2)
(m-2-4) edge node[auto] {$r$} (m-3-4);
\end{tikzpicture}$$

Let $\bar{r}$ be the injection $\displaystyle D_{2n}/\textrm{ ker }(r) \xrightarrow{\bar{r}} PGl_{2}({\mathbb{F}}_p)$.  We obtain the commuting square
$$\begin{tikzpicture}[description/.style={fill=white,inner sep=2pt}]
\matrix (m) [matrix of math nodes, row sep=3em,
column sep=2.5em, text height=1.5ex, text depth=0.25ex]
{ G/ker(\rho) & D_{2n}/ker(r)\\ 
Gl_{2}({\mathbb{F}}_p) &  PGl_{2}({\mathbb{F}}_p). \\ };
\path[->,font=\scriptsize]

(m-1-1) edge node[auto] {} (m-1-2)
(m-1-1) edge node[auto] {$\bar{\rho}$} (m-2-1)
(m-2-1) edge node[auto] {$\pi$} (m-2-2)
(m-1-2) edge node[auto] {$\bar{r}$} (m-2-2);
\end{tikzpicture}$$

Thus, by applying the analysis in \cite[Appendix by Feit]{lange} again to the image, if the image of $r$ is cyclic, then $\rho(G) \subseteq D \subseteq N(D)$ and $\rho$ is not irreducible, so the image of $r$ is dihedral $\rho(G) \subseteq N(D)$.
\end{proof}

An easy computation shows that $\charg{2}{D_{2n}}{{\mathbb{Z}}/m{\mathbb{Z}}}$ depends on the parity on $n$ and $m$.  Specifically,
$$\charg{2}{D_{2n}}{{\mathbb{Z}}/m{\mathbb{Z}}} =
\begin{cases}
0, \textrm{ if } 2 \not| m\\
{\mathbb{Z}}/2{\mathbb{Z}}, \textrm{ if } 2 \not| n, 2|m\\
{\mathbb{Z}}/2{\mathbb{Z}}\times {\mathbb{Z}}/2{\mathbb{Z}} \times {\mathbb{Z}}/2{\mathbb{Z}}, \textrm{ if } 2|n, m.

\end{cases}$$

That is, independent of whether such a group shows up as an irreducible subgroups of $Gl_2({\mathbb{C}})$, there are at most $2$ isomorphism classes of central extensions of $D_{2n}$ by ${{\mathbb{Z}}/m{\mathbb{Z}}}$ in all cases except when both $n$ and $m$ are even.  If $n$ and $m$ are both even, there will always be extensions of $D_{2n}$ by ${{\mathbb{Z}}/m{\mathbb{Z}}}$ which admit faithful two-dimensional irreducible representations, for example central products of $D_{2n}$ with ${{\mathbb{Z}}/m{\mathbb{Z}}}$.  For the time being, we restrict our attention to this case.   Until section \ref{proof}, from this point forward we assume $2 |  n, m$.  We will now classify these groups in terms of the parameters corresponding to their presentation.

\begin{lemma}
\label{identity}
Let $G$ be given by $\langle X, Y, Z \rangle , (\alpha, \beta, \gamma)$ as before.  Let $n, m$ be even, $p$ be such that ${\mathbb{F}}_p$ is sufficiently large for $G$,  ${\mathbb{F}}_p G$ is semisimple.  Then, $G$ is an extension of $D_{2n}$ by $C_{m}$ which admits a faithful two-dimensional irreducible complex representation if and only if $\frac{n}{2}(\alpha + \gamma) = \alpha + \frac{m}{2} (\textrm{mod } m).$
\end{lemma}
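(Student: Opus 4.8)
The plan is to produce a faithful two–dimensional irreducible complex representation of $G$ in a normal form adapted to the index–$2$ abelian subgroup $\langle X,Z\rangle$, to translate the defining relations of $G$ into a handful of equations among eigenvalues, and to read off the displayed congruence; the converse direction will then be the same computation run backwards as an explicit construction.

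First I would fix the normal form. Given a faithful irreducible $\rho\colon G\to Gl_2(\mathbb{C})$, Schur's lemma makes $\rho(Z)=\omega\cdot\mathrm{Id}$ scalar, and faithfulness together with $\mathrm{ord}(Z)=m$ forces $\omega$ to be a primitive $m$-th root of unity. Since $\langle X,Z\rangle$ is abelian of index $2$, $\rho$ is monomial, so after conjugation $\rho(X)=\mathrm{diag}(a,b)$, and conjugating further by a diagonal matrix I may take $\rho(Y)=\left(\begin{smallmatrix}0&1\\ d&0\end{smallmatrix}\right)$. Substituting these into $X^{n}=Z^{\alpha}$, $Y^{2}=Z^{\beta}$ and $YXY^{-1}=X^{n-1}Z^{\gamma}=X^{-1}Z^{\alpha+\gamma}$ yields exactly $a^{n}=b^{n}=\omega^{\alpha}$, $d=\omega^{\beta}$ and $ab=\omega^{\alpha+\gamma}$; these are routine matrix computations I would not belabour.

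The heart of the argument is the order of $\xi:=a/b$. One has $\xi^{n}=a^{n}/b^{n}=1$, and $\xi\neq 1$ (else $\rho$ is reducible); moreover, since $\rho$ is faithful and irreducible, a matrix $\rho(g)$ is scalar exactly when $g$ lies in the center of $G$, which here is $\langle Z\rangle$, so $\rho(X)^{k}=\mathrm{diag}(a^{k},b^{k})$ is scalar exactly when $n\mid k$; hence $\xi$ has order exactly $n$. With $n$ even this gives $\xi^{n/2}=-1=\omega^{m/2}$, and combining $ab=\omega^{\alpha+\gamma}$ with $a/b=\xi$ gives $a^{2}=\xi\,\omega^{\alpha+\gamma}$, whence raising to the power $n/2$,
$$\omega^{\alpha}=a^{n}=\xi^{n/2}\,\omega^{\frac n2(\alpha+\gamma)}=\omega^{\frac m2+\frac n2(\alpha+\gamma)},$$
so that $\frac n2(\alpha+\gamma)\equiv\alpha-\frac m2\equiv\alpha+\frac m2\pmod m$. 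For the converse I would reverse this: starting from the congruence (which on doubling yields $n(\alpha+\gamma)\equiv 2\alpha\pmod m$, the compatibility that makes the presentation a genuine central extension of $D_{2n}$ by $C_{m}$ with $Z$ of order $m$, and which also rules out $\frac n2(\alpha+\gamma)\equiv\alpha$, hence forces $X^{n/2}$ noncentral and $Z(G)=\langle Z\rangle$), I would choose a primitive $m$-th root $\omega$, a primitive $n$-th root $\xi$, then $a$ with $a^{2}=\xi\,\omega^{\alpha+\gamma}$, and set $b=\omega^{\alpha+\gamma}/a$, $d=\omega^{\beta}$; the congruence forces $a^{n}=\omega^{\alpha}=b^{n}$, so $\rho(X)=\mathrm{diag}(a,b)$, $\rho(Z)=\omega\cdot\mathrm{Id}$, $\rho(Y)=\left(\begin{smallmatrix}0&1\\ d&0\end{smallmatrix}\right)$ satisfies the relations, is irreducible ($\rho(X)$ has distinct eigenvalues whose eigenlines $\rho(Y)$ interchanges), and is faithful (an element of $G\setminus\langle X,Z\rangle$ maps to an anti-diagonal matrix, and $X^{i}Z^{j}\in\ker\rho$ forces $n\mid i$ and $j\equiv-\alpha i/n\pmod m$, whence $X^{i}Z^{j}=(X^{n}Z^{-\alpha})^{i/n}=1$ in $G$).

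The step I expect to be the main obstacle is not the bookkeeping with roots of unity but the justification of $\mathrm{ord}(a/b)=n$: this rests on the fact that the center of $G$ is exactly $\langle Z\rangle$, equivalently that $X^{n/2}$ is not central in $G$, i.e. $\frac n2(\alpha+\gamma)\not\equiv\alpha\pmod m$. This has to be reconciled with the convention (implicit in Section \ref{groups}) that $C_{m}$ is the full cyclic center of the irreducible subgroup $G\subseteq Gl_2(\mathbb{C})$, and it relies on $n$ and $m$ both being even, so that $\xi^{n/2}$, $\omega^{m/2}$ and the half-integer multiple $\tfrac n2(\alpha+\gamma)\bmod m$ all make sense; keeping these consistency requirements straight is the delicate part.
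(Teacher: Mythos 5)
Your proposal is correct and follows essentially the same route as the paper: the paper also puts $X,Y,Z$ in diagonal/anti-diagonal form (via Lemma \ref{N(D)}, working over $\mathbb{F}_p$ with explicit exponents of a generator $w$ of $\mathbb{F}_p^*$ rather than over $\mathbb{C}$), translates the relations into the equations $a^n=\omega^\alpha$, $ab\cdot(\text{primitive }n\text{-th root})^{\pm1}=\omega^{\alpha+\gamma}$, and derives the congruence by raising to the power $n/2$ and using that the primitive $n$-th root to the $n/2$ is $-1=\omega^{m/2}$, with the converse obtained by extracting the square root $a$ exactly as you do. The delicate point you flag — that the ratio $a/b$ has order exactly $n$, equivalently $\langle Z\rangle=Z(G)$ — is likewise built into the paper's setup (it takes $\mathbb{Z}/n\mathbb{Z}\cong\langle w^i\rangle$ by convention, since $\mathbb{Z}/m\mathbb{Z}$ is defined as $G$ intersected with the scalars), so your treatment is if anything more explicit about it.
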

\begin{proof}
Since $G$ is isomorphic to image under its faithful irreducible representation into $Gl_{2}({\mathbb{F}}_p)$, By Lemma \ref{N(D)}, $G$ is isomoprhic to the subgroup of $N(D)$ generated by
$$X = \begin{pmatrix} w^{a+i} & 0\\  0 & w^{a} \end{pmatrix}, Y =  \begin{pmatrix} 0 & w^r \\  w^r & 0 \end{pmatrix}, Z = \begin{pmatrix} w^t & 0\\  0 & w^t \end{pmatrix},$$
where $w$ a generator for ${\mathbb{F}}_p^*$, ${\mathbb{Z}}/m{\mathbb{Z}} \cong \langle w^t \rangle$, ${\mathbb{Z}}/n{\mathbb{Z}} \cong \langle w^i \rangle$ and $w^{2r} = 1$ if $\beta = 0$ and $w^t$ if $\beta = 1$.  By a simple computation, the relations
$$ X^n = Z^{\alpha}, Y^2 = Z^{\beta}, YX = X^{n-1} YZ^{\gamma}$$
are equivalent to showing the existence of exponents such that $w^{an} = w^{\alpha t}$ and $w^{2a+i}=w^{(\alpha + \gamma)t}$.  Now let $A=w^{\alpha t}, C=w^{\gamma t} $.  Clearly $A$ and $C$ are $m$-th roots of unity, and by inspection $\frac{n}{2}(\alpha + \gamma) = \alpha + \frac{m}{2} (\textrm{mod } m)$ if and only if $-A = {(AC)}^{\frac{n}{2}}$.  If $w^{na} = w^{\alpha t}$ and $w^{2a+i}=w^{(\alpha + \gamma)t}$, then $$w^{\frac{n}{2} (\alpha + \gamma )t}= w^{\frac{n}{2} 2a+i} = w^{\alpha t} w^{i\frac{n}{2}}= -w^{\alpha t}.$$  So  $-A = {(AC)}^{\frac{n}{2}}$.  Conversely, we show that we can solve $t^n = A, \omega t^2 = AC $.  First, since ${\mathbb{F}}_p$ is sufficiently large, we may solve for $t^2 = {\omega}^{-1} AC.$  Then since $\frac{n}{2}(\alpha + \gamma) = \alpha + \frac{m}{2} (\textrm{mod } m), t^{2\frac{n}{2}} = {(AC)}^{\frac{n}{2}}{{\omega}^{-1}}^{\frac{n}{2}}$, so $t^n = -{(AC)}^{\frac{n}{2}} = A $.  Thus, $G$ given by $\langle X, Y, Z \rangle , (\alpha, \beta, \gamma)$ if and only if  $\frac{n}{2}(\alpha + \gamma) = \alpha + \frac{m}{2} (\textrm{mod } m).$
\end{proof}
With these preliminary results on central extensions of dihedral groups by cyclic groups out of the way, we now concentrate on fusion and universal deformation rings.

\section{Universal deformation rings and fusion}
\label{udr}

In this section we generalize the results of \cite{meyer} on dihedral groups to central extensions of dihedral groups by cyclic groups which admit a faithful irreducible two-dimensional complex representation.  This section will go a long way towards the proofs of Theorems \ref{Th1} and \ref{Th2}.

For the remainder of this section, $G \subseteq Gl_2({\mathbb{C}})$, $G$ is finite, irreducible with $G$ a central extension of $D_{2n}$ and by ${\mathbb{Z}}/m{\mathbb{Z}}$, for $n, m$ even.  As in the previous section, when we use a presentation of $G$, we will say $G$ is given by $\langle X, Y, Z \rangle , (\alpha, \beta, \gamma)$.  We choose an odd prime $p = p(G)$ with ${\mathbb{F}}_p G$ is semisimple, and ${\mathbb{F}}_p$ is sufficiently large for $G$.  We will let $w$ denote a generator for the cyclic group ${\mathbb{F}}_p^*$.  Recall, that $\phi$ is any irreducible two-dimensional ${\mathbb{F}}_p$ representation of $G$ (not necessarily faithful), and $\Gamma = {\Gamma}_{\phi}$ is the corresponding semidirect product $({\mathbb{Z}}/p{\mathbb{Z}}\times {\mathbb{Z}}/p{\mathbb{Z}}) {\rtimes}_{\phi} G$.  

In this section, we will show that there exists an irreducible two-dimensional representation with universal deformation ring different from the $p$-adic integers if and only if $\phi$ is trivial on the center of $G$.  We will then show that when this condition is satisfied, representations $\phi$ with the same fusion will correspond to identical sets $\{ker(\rho), \textrm{where } R({\Gamma}_{{\phi}},V_{\rho}) \ncong \mathbb{Z}_p \}$.

\begin{lemma}
\label{every}
Let $G$ be as above, $G$ given by $\langle X, Y, Z \rangle , (\alpha, \beta, \gamma), \phi$ any irreducible two-dimensional ${\mathbb{F}}_p$ representation.  Then, there exists an absolutely irreducible two-dimensional ${\mathbb{F}}_p \Gamma$-module with universal deformation ring different from the $p$-adic integers if and only if $\phi$ is trivial on the center of $G$.  Moreover, the set of two-dimensional irreducible modules with universal deformation ring different from the $p$-adic integers is a full orbit under the action of the character group of $G$.
\end{lemma}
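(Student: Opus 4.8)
The plan is to reduce the whole statement to a cohomological computation using Theorem~\ref{thm:udr}. Since every absolutely irreducible two-dimensional $\mathbb{F}_p\Gamma$-module $V$ is inflated from an irreducible $\mathbb{F}_pG$-module, and since $\Gamma$ has normal Sylow $p$-subgroup $N = \mathbb{Z}/p\mathbb{Z}\times\mathbb{Z}/p\mathbb{Z}$ with $\Gamma/N \cong G$ of order prime to $p$, the inflation-restriction (Lyndon--Hochschild--Serre) sequence degenerates: $\mathrm{H}^i(\Gamma,\mathrm{Hom}_{\mathbb{F}_p}(V,V)) \cong \mathrm{H}^i(N,\mathrm{Hom}_{\mathbb{F}_p}(V,V))^G$ for all $i$, because $\mathrm{H}^j(G,-)$ vanishes for $j>0$ on $\mathbb{F}_p$-modules. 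By Theorem~\ref{thm:udr}, $R(\Gamma,V)\cong\mathbb{Z}_p$ exactly when $d^1_V = 0$, i.e.\ when $\mathrm{H}^1(N,\mathrm{Hom}_{\mathbb{F}_p}(V,V))^G = 0$. So the first step is to set up this identification and record that the question becomes: for which irreducible $V$ is this $G$-fixed cohomology group nonzero?

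Next I would compute $\mathrm{H}^1(N,\mathrm{Hom}_{\mathbb{F}_p}(V,V))$ as a $G$-module. Since $N$ is elementary abelian of rank $2$ and $p$ is odd, $\mathrm{H}^1(N,\mathbb{F}_p)\cong \mathrm{Hom}(N,\mathbb{F}_p) \cong N^{\vee}$ as $G$-modules (the dual of the defining module $\phi$), and with the trivial $N$-action on $\mathrm{Hom}_{\mathbb{F}_p}(V,V)$ one gets $\mathrm{H}^1(N,\mathrm{Hom}_{\mathbb{F}_p}(V,V)) \cong N^{\vee}\otimes_{\mathbb{F}_p}\mathrm{Hom}_{\mathbb{F}_p}(V,V)$ as $G$-modules. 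Taking $G$-invariants and using that $\mathbb{F}_pG$ is semisimple, $\dim_{\mathbb{F}_p}\big(N^{\vee}\otimes\mathrm{Hom}(V,V)\big)^G$ equals the multiplicity of the trivial module in $N^{\vee}\otimes V^{\vee}\otimes V$, equivalently the multiplicity of $V$ inside $\phi\otimes V$ (after untwisting duals appropriately). The central character enters here: if $\phi$ is nontrivial on $Z(G)$ then $\phi\otimes V$ and $V$ have different central characters, so the multiplicity is $0$ for every $V$, forcing $R(\Gamma,V)\cong\mathbb{Z}_p$ always — this recovers the necessity direction of \cite[Cor.~3.4]{meyer}. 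If $\phi$ is trivial on $Z(G)$, I would exhibit at least one $V$ with positive multiplicity; for the groups at hand (central extensions of $D_{2n}$ by $\mathbb{Z}/m\mathbb{Z}$, so $G\subseteq N(D)$) the two-dimensional irreducibles are explicit enough (via Lemma~\ref{N(D)}) that one can check $\phi\otimes V$ contains a two-dimensional summand isomorphic to some $V'$, and iterating shows nonvanishing for the relevant $V$.

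For the "full orbit under the character group of $G$" clause: the abelianization of $G$ (here $G/[G,G]$, with $G$ a central extension of a dihedral group, has a nontrivial character group $\widehat{G}$ of order-two-power size) acts on $\Sigma(\Gamma)$ by $V \mapsto V\otimes\chi$ for a one-dimensional character $\chi$ of $G$ inflated to $\Gamma$. Since tensoring by $\chi$ is trivial on $N$ it does not change the restriction to $N$, hence does not change $\mathrm{H}^1(N,\mathrm{Hom}(V,V)) = \mathrm{H}^1(N,\mathrm{Hom}(V\otimes\chi,V\otimes\chi))$ (the $\mathrm{Hom}$ space is insensitive to the twist) nor its $G$-invariants. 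Therefore $R(\Gamma,V)\ncong\mathbb{Z}_p$ iff $R(\Gamma,V\otimes\chi)\ncong\mathbb{Z}_p$, so the set of such modules is a union of $\widehat{G}$-orbits. To see it is a \emph{single} orbit I would argue that $\phi\otimes\chi \cong \phi$ for the relevant central character reason forces the multiplicity condition to pin down $V$ up to exactly such a twist — concretely, for the $N(D)$-description the two-dimensional irreducibles differ precisely by characters of $G/[G,G]$, and the condition "$\phi$ trivial on $Z(G)$, multiplicity of $V$ in $\phi\otimes V$ positive" cuts out one orbit.

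I expect the main obstacle to be the last point: verifying that the nonconstant locus is a \emph{single} $\widehat{G}$-orbit rather than several. The vanishing/nonvanishing dichotomy and the "union of orbits" statement are formal once the LHS spectral sequence collapse and the tensor-twist invariance are in hand; but transitivity of the $\widehat{G}$-action on the nonconstant locus requires actually parametrizing the two-dimensional irreducibles of $G$ (via the $N(D)$ embedding and the triple $(\alpha,\beta,\gamma)$ from Lemmas~\ref{N(D)} and \ref{identity}) and checking that the central-character constraint leaves exactly one twist-class with nonzero invariants. That bookkeeping — matching characters of $N(D)$-type groups against the condition on $Z(G)$ — is where the real work lies.
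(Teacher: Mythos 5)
Your overall route is the same as the paper's: the Lyndon--Hochschild--Serre collapse to $\mathrm{H}^1(N,\mathrm{Hom}_{\mathbb{F}_p}(V,V))^G$, the identification of this with the multiplicity of $V_\phi$ as a summand of $\mathrm{Hom}_{\mathbb{F}_p}(V_\theta,V_\theta)$, the central-character argument for the necessity direction, and an explicit construction in $N(D)$ for sufficiency are all exactly what the paper does (it quotes the criterion ``$R(\Gamma,V_\theta)\ncong\mathbb{Z}_p$ iff $V_\phi$ is a summand of $V_\theta^*\otimes V_\theta$'' from \cite{meyer} and then writes down the representation $\rho_\ell$ with $\rho_\ell(X)=\mathrm{diag}(w^{a\ell},w^{a\ell+i\ell})$, verifying the relations from Lemma \ref{identity}). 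So the first two thirds of your plan are fine and match the paper.

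The gap is the single-orbit step, and the one concrete justification you offer for it would fail: it is not true that the two-dimensional irreducibles of these groups all differ by one-dimensional characters. Already for $D_{2n}$, twisting $\theta_1$ by a character only changes the diagonal entries of $\theta_1(r)$ by signs, so $\theta_1\otimes\chi$ is $\theta_1$ or $\theta_{\frac{n}{2}-1}$ and (for $n\neq 6$) never $\theta_2$; the analogous statement for the central extensions is likewise false. What is actually needed, and what the paper proves, is the restricted claim: if $\mathrm{Hom}_G(V_\phi,\mathrm{Hom}(V_\tau,V_\tau))\neq 0$ then the two-dimensional indecomposable summand of $\mathrm{Hom}(V_\tau,V_\tau)$ (spanned by the off-diagonal elementary matrices) must be isomorphic to $\theta_\ell$, which forces $\tau(X)=\mathrm{diag}(w^{f},w^{f+i\ell})$, $\tau(Y)$ antidiagonal with equal entries $w^{s}$, and $\tau(Z)$ scalar $w^{tg}$; one then checks directly, using the relation constraints satisfied by both $\tau$ and $\rho_\ell$, that the entrywise ratio $X\mapsto w^{f-a\ell}$, $Y\mapsto w^{s-r}$, $Z\mapsto w^{tg-t\ell}$ satisfies the defining relations of $G$ and hence is a one-dimensional character. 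That explicit verification --- not a parametrization of all of $\Sigma(\Gamma)$ by $\widehat{G}$ --- is what makes the nonconstant locus a single orbit, and it is the piece your proposal leaves unproven.
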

\begin{proof}
 Recall that a complete set of  isomorphism classes of 2-dimensional irreducible representations of $D_{2n}$ over $\mathbb{F}_p$ are given by

$$r\xrightarrow{\theta_{\ell}} \begin{pmatrix} \omega^{i\ell}&0\\ 0&\omega^{-i\ell} \end{pmatrix}, \hspace*{.2 in}
s\xrightarrow{\theta_{\ell}} \begin{pmatrix} 0&1\\   1&0 \end{pmatrix}$$
where $D_{2n}$ is given by $\langle r,s | r^n, s^2, srs^{-1}r \rangle,$ and $1 \leq \ell \leq \frac{n}{2} -1$. \\

First, say $\beta = 0$.  For any $\phi$ trivial on $Z(G)$ we will construct explicitly a representation with a non-trivial universal deformation ring.  By Lemma \ref{N(D)}, we may assume $$X = \begin{pmatrix} w^{a+i} & 0\\  0 & w^{a} \end{pmatrix}, Y =  \begin{pmatrix} 0 & w^r \\  w^r & 0 \end{pmatrix}, Z = \begin{pmatrix} w^t & 0\\  0 & w^t \end{pmatrix}, $$ 

where, 
\begin{eqnarray}
w^{an} = w^{t\alpha}, w^{2r}=1, w^{2a+i} = w^{t(\alpha + \gamma)}.
\end{eqnarray}

\smallskip

Since $\phi$ is trivial on $\langle Z \rangle$, $\phi = {\theta}_{\ell}$ for some $\ell$.  Explicitly, $\phi = {\theta}_{\ell}$ is given by

$$X \xrightarrow{{\theta}_{\ell} } \begin{pmatrix} w^{i \ell} & 0\\  0 & w^{-i \ell } \end{pmatrix}, Y \xrightarrow{{\theta}_{\ell} }  \begin{pmatrix} 0 & 1 \\  1 & 0 \end{pmatrix}, Z \xrightarrow{{\theta}_{\ell} } \begin{pmatrix} 1 & 0\\  0 & 1 \end{pmatrix}. $$

\smallskip
\noindent
We proceed as in \cite{meyer}, showing that for $\phi = {\theta}_{\ell}$, a representation $\theta$ of $G$ has universal deformation ring different from ${\mathbb{Z}}_p$ if and only if $V_{\phi}$ is a summand of $V_{\theta}^* \otimes V_{\theta} = {\textrm{Hom}}_{{\mathbb{F}}_p}(V_{\theta},V_{\theta})$.  By Lemma \ref{N(D)} we choose a basis such that with respect to this basis the image of $\theta$ is contained in $N(D)$.  By inspection, the one dimensional modules generated by each of $$\begin{pmatrix} 1 & 0\\0 & 1\end{pmatrix},\begin{pmatrix} 1 & 0\\0 & -1\end{pmatrix}$$

respectively, are each summands of ${\textrm{Hom}}_{{\mathbb{F}}_p}(V_{\theta},V_{\theta})$ with the adjoint action.  A simple computation shows that the span of 
$$\begin{pmatrix} 0 & 0\\1 & 0\end{pmatrix}, \begin{pmatrix} 0 & 1\\0 & 0\end{pmatrix}$$

is an indecomposable two-dimensional submodule of ${\textrm{Hom}}_{{\mathbb{F}}_p}(V_{\theta},V_{\theta})$, and is isomorphic to ${\theta}_{\ell}$ if and only if $X \xrightarrow{\theta}  \begin{pmatrix} w^e & 0\\0 & w^{e+ i \ell}\end{pmatrix}$.

\medskip 

With this observation in mind, we define a representation ${\rho}_{\ell}$ of $G$ by,
$$X \xrightarrow{{\rho}_{\ell}} \begin{pmatrix} w^{a\ell} & 0\\  0 & w^{a\ell + i \ell} \end{pmatrix}, Y \xrightarrow{{\rho}_{\ell}} \begin{pmatrix} 0 & w^r \\  w^r & 0 \end{pmatrix}, Z \xrightarrow{{\rho}_{\ell}}  \begin{pmatrix} w^{t\ell} & 0\\  0 & w^{t\ell} \end{pmatrix}$$

\smallskip

One can verify directly that using the identities in (1), that

\begin{eqnarray*}
({{\rho}_{\ell}(X)})^n &=& ({{\rho}_{\ell}(Z)})^{\alpha}, ({{\rho}_{\ell}(Y)})^2 = 1= ({{\rho}_{\ell}(Z)})^m, \\ {{\rho}_{\ell}(Y)} {{\rho}_{\ell}(X)}&=&({{\rho}_{\ell}(X)})^{n-1}{{\rho}_{\ell}(Y)}({{\rho}_{\ell}(Z)})^{\gamma}.
\end{eqnarray*}

Therefore, ${\rho}_{\ell}$ is a representation of $G$.  By inflation, ${\rho}_{\ell}$ is an absolutely irreducible ${\mathbb{F}}_p \Gamma$-module.   By observing the difference between the exponents of the diagonal entries of ${\rho}_{\ell}(X)$, we see that $R(\Gamma,V_{{\rho}_{\ell}})$ is ${\mathbb{Z}}_{p}[[t]]/(t^2,pt)$ \cite{meyer, bleher-chinburg-desmit}.  Moreover, for any representation $\tau$ with $R(\Gamma, V_{\tau}) \ncong {\mathbb{Z}}_p$, $\tau$ must send $$ X \xrightarrow{\tau}  \begin{pmatrix} w^f & 0\\  0 & w^{f+\ell i} \end{pmatrix}, Y \xrightarrow{\tau} \begin{pmatrix} 0 & w^s \\  w^s & 0 \end{pmatrix}, Z \xrightarrow{\tau} \begin{pmatrix} w^{tg} & 0\\  0 & w^{tg} \end{pmatrix} .$$  Since, by hypothesis, $\tau$ is a representation of $G$, it must be the case that
\begin{eqnarray}
 w^{2s} = 1, w^{nf} = w^{tg \alpha} = w^{nag}, w^{tg(\alpha + \gamma)} = w^{2f + \ell i} .  
\end{eqnarray}
Thus, we define $\chi$ by $$X \xrightarrow{\chi} w^{f- \ell a}, Y \xrightarrow{\chi} w^{s-r}, Z \xrightarrow{\chi} w^{tg-t \ell}.$$
Then, by direct computation using (1), (2), $$({\chi(X)})^n = ({\chi(Z)})^{\alpha}, ({\chi(Y)})^2 = 1= ({\chi(Z)})^m, {\chi(Y)}{\chi(X)}=({\chi(X)})^{n-1}{\chi(Y)}({\chi(Z)})^{\gamma}.$$  Thus, $\chi$ is a character of $G$.  Since the action of the character group preserves the difference between the exponents of the diagonal entries of ${\rho}_{\ell}(X)$, the result holds.

\smallskip
If instead $\beta = 1$, then from our presentation, we have that
\begin{eqnarray*}
w^{an} = w^{t\alpha}, w^{2r}=1, w^{t} = w^{t(\alpha + \gamma)}.
\end{eqnarray*}

For $\phi = {\theta}_{\ell}$, we let ${\rho}_{\ell}$ be defined by
$$X \xrightarrow{{\rho}_{\ell}} \begin{pmatrix} w^{a\ell} & 0\\  0 & w^{a\ell + i \ell} \end{pmatrix}, Y \xrightarrow{{\rho}_{\ell}} \begin{pmatrix} 0 & w^{r \ell} \\  w^{r \ell} & 0 \end{pmatrix}, Z \xrightarrow{{\rho}_{\ell}}  \begin{pmatrix} w^{t\ell} & 0\\  0 & w^{t\ell} \end{pmatrix}$$
\smallskip

Again, ${\rho}_{\ell}$ has $R(\Gamma,V_{{\rho}_{\ell}}) \ncong {\mathbb{Z}}_p$ and any other such representation differs from ${\rho}_{\ell}$ by a character.  
\end{proof}

\medskip

\begin{remark}
Note that of course $D_{4n}$ is both dihedral and a central extension of $D_{2n}$ by ${\mathbb{Z}}/2{\mathbb{Z}}$.  We reconcile the results of \cite[Thm. 1.1 b]{meyer} with Lemma \ref{every} by pointing out that the faithful representation of $D_{4n}$,  ${\theta}_1$ is not trivial $Z = r^n$.  When we view $G = D_{4n}$ as a central extension of  $D_{2n}$ by ${\mathbb{Z}}/2{\mathbb{Z}} = \langle r^n \rangle$, the representation of $G$ inflated from the representation of $D_{2n}$ which we called $\theta_1$  is labeled $\theta_2$ when viewed as a representation of $D_{4n}$.
\end{remark}
\begin{corollary}
Given $p, n, m, G = \langle X, Y, Z \rangle , (\alpha, \beta, \gamma)$ as before.  By using universal deformation rings we show that in fact the center of $G$ must be exactly $\langle Z \rangle$.  
\end{corollary}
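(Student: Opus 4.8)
The plan is to derive this from the explicit construction carried out inside the proof of Lemma~\ref{every}, together with the necessity of center-triviality recorded in the introduction (\cite[Cor.~3.4]{meyer}). First I would narrow down the possibilities for $Z(G)$. The relations $XZ=ZX$, $YZ=ZY$ show $\langle Z\rangle\subseteq Z(G)$. Conversely, if $g\in Z(G)$ then $\pi(g)$ commutes with all of $\pi(G)=D_{2n}$, so $\pi(g)\in Z(D_{2n})$; since $n$ is even, $Z(D_{2n})=\langle r^{n/2}\rangle$ has order $2$, and therefore $Z(G)\subseteq\pi^{-1}(\langle r^{n/2}\rangle)$. Because $X^{n/2}$ commutes with $Z$ and $(X^{n/2})^2=X^n=Z^{\alpha}\in\langle Z\rangle$, this preimage is exactly $\langle Z,X^{n/2}\rangle$, which contains $\langle Z\rangle$ with index $2$. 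So either $Z(G)=\langle Z\rangle$, the desired conclusion, or $Z(G)=\langle Z,X^{n/2}\rangle$; it remains to exclude the second case.

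To exclude it I would take $\ell=1$, which is a legitimate index because $n$ is even and at least $4$, so that $1\le \ell\le\frac n2-1$. Let $\phi$ be the inflation to $G$ of the two-dimensional irreducible representation $\theta_1$ of $D_{2n}$; then $\phi$ is an irreducible two-dimensional ${\mathbb F}_p$-representation of $G$ which is trivial on $\ker\pi=\langle Z\rangle$. The construction in the proof of Lemma~\ref{every} now produces a representation $\rho_1$ of $G$ whose inflation $V_{\rho_1}$ is an absolutely irreducible two-dimensional ${\mathbb F}_p\Gamma_\phi$-module with $R(\Gamma_\phi,V_{\rho_1})\cong{\mathbb Z}_p[[t]]/(t^2,pt)\ncong{\mathbb Z}_p$. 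By \cite[Cor.~3.4]{meyer}, applied to $\Gamma_\phi$, the existence of such a module forces $\phi$ to be trivial on $Z(G)$. If $Z(G)$ were $\langle Z,X^{n/2}\rangle$ this would require $\phi(X^{n/2})=I$, i.e. $\theta_1(r^{n/2})=I$; but $\theta_1(r^{n/2})=\begin{pmatrix}\omega^{in/2}&0\\0&\omega^{-in/2}\end{pmatrix}=-I$, since $\omega^{i}$ has order $n$, so $\omega^{in/2}$ has order $2$ in ${\mathbb F}_p^{*}$. This contradiction forces $Z(G)=\langle Z\rangle$.

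The one point demanding genuine care — and the step I would single out as the crux — is the claim that the construction of $\rho_\ell$ in the proof of Lemma~\ref{every} goes through verbatim for odd $\ell$: the verification that $\rho_\ell$ respects the defining relations of $G$ relies only on the identities~(1) coming from the realization of $G$ inside $N(D)\subseteq Gl_2({\mathbb F}_p)$ (Lemmas~\ref{N(D)} and~\ref{identity}) and on $w^i$ having order $n$, and never on the stronger hypothesis that $\phi$ be trivial on all of $Z(G)$ rather than merely on $\langle Z\rangle$. Once this is confirmed, the corollary is immediate; in effect it says that the ``if'' and ``only if'' halves of Lemma~\ref{every} would contradict each other if $Z(G)$ were strictly larger than $\langle Z\rangle$, which is exactly what we wish to rule out.
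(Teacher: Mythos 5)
Your proof is correct and follows essentially the same route as the paper: inflate a faithful $\theta_\ell$ of $D_{2n}$, use the construction from Lemma~\ref{every} (which, as you rightly flag, only needs triviality on $\langle Z\rangle$) to produce a module with universal deformation ring $\ncong \mathbb{Z}_p$, and then invoke \cite[Cor.~3.4]{meyer} to force $\phi$ to be trivial on $Z(G)$, so that $Z(G)\subseteq\ker\phi=\langle Z\rangle$. Your explicit enumeration of the two candidates for $Z(G)$ and the computation $\theta_1(r^{n/2})=-I$ are just a more detailed rendering of the paper's terse "the result follows."
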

\begin{proof}
Suppose $G$ is given and let $\Gamma$ correspond to a faithful representation $\phi$ of $D_{2n} = G / \langle Z \rangle$.  That is, $\phi$ corresponds to a composition $$G \to D_{2n} = G / \langle Z \rangle \xrightarrow{{\theta}_k} N(D)$$  where ${\theta}_k$ is faithful.  From the proof of Lemma \ref{every}, we have seen that there exists a two-dimensional representation ${\rho}_{\ell}$ with $R(\Gamma,V_{{\rho}_{\ell}}) \ncong {\mathbb{Z}}_p $.  Then since $\phi$ is indecomposable by \cite{meyer}, it must therefore be trivial on the center of $G$ the result follows.  
\end{proof}
We note that this fact can be proved using alternative methods.\\

With Lemma \ref{every} in mind, we now compute the character group explicitly for $G$.

\begin{lemma}
\label{characters}
 Let $G$ be given by $\langle X, Y, Z \rangle , (\alpha, \beta, \gamma)$, and let $m' = \frac{m}{2}$.  Then, the characters of $G$ are as follow:
\begin{itemize}
\item[i.]
If $\beta = 0$, $2t' = -\alpha - \gamma$, then the characters of $G$ are $$X \xrightarrow{\chi_{a,b,c}} {(-1)}^a I^{-t'c}, Y \xrightarrow{\chi_{a,b,c}} {(-1)}^b, Z \xrightarrow{\chi_{a,b,c}} I^c $$
where $a, b = 0, 1$ and $c = 0, 1, ... m' - 1$ and $\displaystyle \langle I \rangle = {\mathbb{Z}}/m'{\mathbb{Z}}$.
\item[ii.]
If $\beta = 0, 2t' + 1 = -\alpha -\gamma$, then the characters of $G$ are $$X \xrightarrow{\chi_{a,b}} I^{a(-\alpha -\gamma)}, Y \xrightarrow{\chi_{a,b}} (-1)^b, Z \xrightarrow{\chi_{a,b}} I^{-2a} $$
where $a = 0, 1, ... m, b = 0, 1$ and $\displaystyle \langle I \rangle = {\mathbb{Z}}/m{\mathbb{Z}}$.
\item[iii.]
If $\beta = 1, 2t' = -\alpha - \gamma$, then the characters of $G$ are $$X \xrightarrow{\chi_{a,b}} (-1)^b I^{a(\alpha + \gamma)}, Y  \xrightarrow{\chi_{a,b}} I^a, Z\xrightarrow{\chi_{a,b}} I^{2a} $$
where $a = 0, 1, ... m - 1, b = 0, 1$ and $\langle I \rangle = {\mathbb{Z}}/m{\mathbb{Z}}$.
\item[iv.]
If $\beta = 1, 2t'+1 = -\alpha -\gamma$, then the characters of $G$ are $$X \xrightarrow{\chi_{a,b}} I^{a(-\alpha - \gamma)}, Y \xrightarrow{\chi_{a,b}} (-1)^b I^{-a}, Z \xrightarrow{\chi_{a,b}} I^{-2a} $$
where $a = 0, 1, ... m-1, b = 0, 1$ and $\langle I \rangle = {\mathbb{Z}}/m{\mathbb{Z}}$.
\end{itemize}
\end{lemma}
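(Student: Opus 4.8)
The plan is to observe that any (linear) character of $G$ factors through the abelianization $G^{\mathrm{ab}}=G/[G,G]$, so that the character group of $G$ is $\mathrm{Hom}(G^{\mathrm{ab}},\mathbb{F}_p^\times)$; since $m\mid p-1$ and $\mathbb{F}_p$ is sufficiently large for $G$, the group $\mathbb{F}_p^\times$ contains all the roots of unity that occur, so $\mathrm{Hom}(G^{\mathrm{ab}},\mathbb{F}_p^\times)$ is (non-canonically) isomorphic to $G^{\mathrm{ab}}$, and everything reduces to computing $G^{\mathrm{ab}}$ together with a convenient set of generators. First I would abelianize the presentation $\langle X,Y,Z\rangle,(\alpha,\beta,\gamma)$ (with $Z$ central and $Z^m=1$): killing commutators turns $YX=X^{n-1}YZ^\gamma$ into $X^{2-n}=Z^\gamma$, and multiplying by $X^n=Z^\alpha$ gives $X^2=Z^{\alpha+\gamma}$ in $G^{\mathrm{ab}}$.

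Next I would record $G^{\mathrm{ab}}$ as the cokernel of the integer matrix whose rows are the abelianized relators; after the reduction above its relation lattice, in the basis $X,Y,Z$, is spanned by $(2,0,-(\alpha+\gamma))$, $(0,2,-\beta)$ and $(0,0,m)$. The one genuinely nonformal input is Lemma \ref{identity}: it forces $\tfrac n2(\alpha+\gamma)-\alpha\equiv\tfrac m2\pmod m$, which lets me replace $(0,0,m)$ by $(0,0,m')$ with $m'=\tfrac m2$ (the relator $X^n=Z^\alpha$ thereby becoming redundant). The determinant of the resulting $3\times 3$ relation matrix is $4m'$, so $|G^{\mathrm{ab}}|=4m'$ in every case, matching the stated counts.

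Then comes the case analysis, which is essentially a Smith-normal-form computation depending on the $2$-adic behaviour of $\alpha+\gamma$ and of $\beta$: whether $Z^{\alpha+\gamma}$ (resp. $Z^\beta$) is a square in $\langle Z\rangle\cong\mathbb{Z}/m'$ — equivalently whether $-\alpha-\gamma$ equals $2t'$ or $2t'+1$, and whether $\beta=0$ or $\beta=1$ — determines whether $X$ (resp. $Y$) can be adjusted by a power of $Z$ to split off a direct $\mathbb{Z}/2$ summand, giving $G^{\mathrm{ab}}\cong\mathbb{Z}/m'\times\mathbb{Z}/2\times\mathbb{Z}/2$ (case i), or instead acquires order $m=2m'$ and absorbs $\langle Z\rangle$ (cases ii--iv), in which case the enumeration is most cleanly packaged by a single parameter ranging over $\mathbb{Z}/m$. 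In each of the four cases I would read off generators of $\widehat{G^{\mathrm{ab}}}$, write out their values on $X,Y,Z$, and then simply verify that the characters displayed in the statement satisfy the defining relations of $G$ and are pairwise distinct; since there are $4m'$ of them, they exhaust the character group.

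I expect the case analysis to be the main obstacle, specifically the subcases where $\gcd(\alpha+\gamma,m)$ (or the analogous quantity for $\beta$) exceeds $2$: then the informal picture in which $X$ absorbs $\langle Z\rangle$ is only literally correct after restricting to suitable cyclic subgroups, and one has to be careful that the asymmetric parametrizations of cases ii--iv really do run over $\widehat{G^{\mathrm{ab}}}$ without repetition. A direct cyclic-group counting argument, using $|G^{\mathrm{ab}}|=4m'$, closes this point; the rest is linear algebra over $\mathbb{Z}/m$.
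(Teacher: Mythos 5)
Your proposal is correct and follows essentially the same route as the paper: both reduce to computing $G^{\mathrm{ab}}$ from the abelianized presentation, invoke Lemma \ref{identity} to force $Z^{m/2}=1$ in $G^{\mathrm{ab}}$ (hence $|G^{\mathrm{ab}}|=4m'=2m$), and then run a Smith-normal-form case analysis on the parity of $-\alpha-\gamma$ and on $\beta$ to extract generators such as $XZ^{t'},Y,Z$ and read off the characters. The only cosmetic difference is that the paper first identifies $[G,G]=\langle X^2Z^{-\alpha-\gamma}\rangle$ by an explicit commutator computation before passing to the quotient, whereas you abelianize the presentation wholesale.
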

\begin{proof}
First, we compute the commutator subgroup $[G,G]$.  Suppose $\beta = 0$.  By direct computation, $[X,Y] = XYX^{n-1}Z^{-\gamma}Y = XYX^{n-1}YZ^{-\gamma} = XY^2 XZ^{-\alpha}Z^{-\gamma} = X^2 Z^{-\alpha -\gamma}$, so $\langle X^2 Z^{(-\alpha - \gamma)}\rangle \subseteq [G,G]$.  Since $[G,G] \lhd G$, $YX^2 Z^{(-\alpha - \gamma)}Y \in[G,G]$, but
\begin{eqnarray*}
YX^2 Z^{-\alpha - \gamma}Y &=& YXXYZ^{-\alpha - \gamma} \\
&=& X^{n-1}YXYZ^{\gamma -\alpha - \gamma}\\
&=& X^{n-1}YXYZ^{-\alpha }\\
&=& X^{n-1} X^{n-1} Z^{\gamma} Z^{-\alpha }\\
&=& Z^{2 \alpha}X^{n-2} Z^{-\alpha } Z^{\gamma-\alpha }\\
&=& X^{n-2} Z^{\gamma}\\
&=& {(X^2 Z^{(-\alpha - \gamma)})}^{-1}.\\
\end{eqnarray*}
Therefore $Y X^2 Z^{(-\alpha - \gamma)} = {(X^2 Z^{(-\alpha - \gamma)})}^{-1} Y$, so $\langle X^2 Z^{(-\alpha - \gamma)}\rangle \lhd G$.  Since $G / \langle X^2 Z^{(-\alpha - \gamma)}\rangle$ is abelian, $[G,G]=\langle X^2 Z^{(-\alpha - \gamma)}\rangle$.  A similar computation holds when $\beta = 1$.

Now, since $G$ admits a faithful two-dimensional complex representation, by Lemma \ref{identity}, we have that $\frac{n}{2}(\alpha + \gamma) = \alpha + \frac{m}{2} ( \textrm{ mod } m)$.  By the above computation, a counting argument shows that  $| G/[G,G]|$ is $4$ times the greatest common divisor of $ \frac{n}{2} (-\alpha - \gamma) + \alpha  (\textrm{ mod } m )$ and $m$.  Therefore the character group of $G$ has order $4 \frac{m}{2} = 2m$.  We will compute the characters explicitly.  

First, suppose  $\beta = 0$.  We solve the system $n X = \alpha Z, 2 Y = 0, 2 X = (\alpha + \gamma) Z, (m/2) Z = 0$ in ${\mathbb{Z}}-$mod.  Recall that $\frac{n}{2}(\alpha + \gamma) = \alpha + \frac{m}{2} ( \textrm{ mod } m)$.  Using row operations,

\vspace{.2 in}
 $$\begin{pmatrix} 2 & 0 & -\alpha - \gamma \\ n & 0 & - \alpha \\ 0 & 2 & 0\\ 0 & 0 & m/2 \end{pmatrix} \to \begin{pmatrix} 2 & 0 & -\alpha - \gamma \\ 0 & 0 & \frac{n}{2} (-\alpha - \gamma) + \alpha \\ 0 & 2 & 0\\ 0 & 0 & m/2 \end{pmatrix} \to \begin{pmatrix} 2 & 0 & -\alpha - \gamma \\ 0 & 2 & 0 \\  0 & 0 & m/2 \\ 0 & 0 & 0 \end{pmatrix}_.$$

\medskip 

Now, suppose $2 t' = -\alpha - \gamma$.  Then using column operations we obtain

\vspace{.2 in}
$$\begin{pmatrix} 2 & 0 & -\alpha - \gamma \\ 0 & 2 & 0 \\  0 & 0 & m/2 \\ 0 & 0 & 0 \end{pmatrix} \to \begin{pmatrix} 2 & 0 & 0 \\ 0 & 2 & 0 \\  0 & 0 & m/2 \\ 0 & 0 & 0 \end{pmatrix}_.$$

\medskip

\noindent
Thus, the character group of $G$ is isomorphic to ${\mathbb{Z}}/  2{\mathbb{Z}} \times {\mathbb{Z}}/  2{\mathbb{Z}} \times  {\mathbb{Z}}/  m'{\mathbb{Z}}$ with explicit generators $X Z^{t'}, Y$ and $Z$.  Solving for the images of $X, Y, Z$ respectively we obtain the result for $\beta = 0, 2t' = -\alpha -\gamma$.  The other cases are similar.
\end{proof}

For $\phi$ trivial on the center of $G$, ${\phi}= {\theta}_{{\ell}}$, for some representation ${\theta}_{{\ell}}$ of $D_{2n}$.  We now show that two representations $\phi_i = {\theta}_{{\ell}_i}$ for $i = 1, 2$, for which the parameter ${\ell}_i$ has the same greatest common divisor with $n$, have the same set of kernels of those representations whose corresponding ${\mathbb{F}}_p {{\Gamma}}_{\phi_i}$-modules have universal deformation ring not the $p$-adic integers.

\begin{proposition}
\label{diophantine}
Let  ${\phi}_i = {\theta}_{{\ell}_i}$ for $ i =1,2$ be two two-dimensional representations of $G= \langle X, Y, Z \rangle$ which are trival on $\langle Z \rangle$.  Then, if $({\ell}_1,n) = ({\ell}_2,n)$, then
$$\{ker(\theta), \textrm{where } R({\Gamma}_{{\theta}_1},V_{\theta}) \ncong \mathbb{Z}_p \}  = \{ker(\theta), \textrm{where } R({\Gamma}_{{\theta}_2},V_{\theta}) \ncong \mathbb{Z}_p \}. $$
\end{proposition}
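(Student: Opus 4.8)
The plan is to use Lemma~\ref{every} to replace the universal-deformation condition by membership in a $\widehat{G}$-orbit, then to show that the two orbits in question have the same set of kernels by reducing everything to a single coset inside a character group of a subgroup of $G$ that depends only on $d=(\ell_1,n)=(\ell_2,n)$.

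First I would reduce to $G$. By Lemma~\ref{every}, for each $i$ the set of two-dimensional irreducible $\mathbb{F}_p\Gamma_{\theta_i}$-modules with universal deformation ring not isomorphic to $\mathbb{Z}_p$ is exactly the $\widehat{G}$-orbit of $V_{\rho_{\ell_i}}$, where $\rho_{\ell_i}$ is the representation constructed in the proof of that lemma. Since every such module is inflated from $G$, one has $\ker(\theta \text{ on } \Gamma_{\theta_i}) = \pi^{-1}(\ker(\bar\theta \text{ on } G))$, so it is enough to prove that the $\widehat{G}$-orbits of $\rho_{\ell_1}$ and $\rho_{\ell_2}$ have the same set of kernels as representations of $G$. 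Write $d=(\ell_1,n)=(\ell_2,n)$.

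Next I would locate those kernels. Put $H_d=\langle X^{n/d},Z\rangle=\{X^jZ^u : (n/d)\mid j\}$; using the defining relations of $G$ (notably $YXY^{-1}=X^{n-1}Z^{\gamma}$ and $X^n=Z^{\alpha}$) one checks $H_d\trianglelefteq G$, and $H_d$ depends only on $d$. For any $\chi\in\widehat{G}$ I claim $\ker(\rho_{\ell_i}\otimes\chi)\subseteq H_d$: on the nontrivial coset of $\langle X,Z\rangle$ in $G$ the matrix $(\rho_{\ell_i}\otimes\chi)(g)$ is skew-diagonal, hence $\neq I$; and on $\langle X,Z\rangle$ the matrix $(\rho_{\ell_i}\otimes\chi)(X^jZ^u)$ is diagonal with ratio of entries $w^{i\ell_i j}$, which equals $1$ only when $(n/d)\mid j$. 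Now $\rho_{\ell_i}$ restricted to $H_d$ is a scalar character $\mu_{\ell_i}$, namely the restriction of the linear character $\lambda_{\ell_i}$ of $\langle X,Z\rangle$ sending $X\mapsto w^{a\ell_i}$ and $Z\mapsto w^{t\ell_i}$ (well defined by the identities recorded in the proof of Lemma~\ref{every}), so $\ker(\rho_{\ell_i}\otimes\chi)=\ker(\mu_{\ell_i}\cdot\chi|_{H_d})$. Writing $R=\{\chi|_{H_d}:\chi\in\widehat{G}\}\leq\widehat{H_d}$, a subgroup independent of $i$, the set attached to $\theta_i$ is thus the set of kernels of the coset $\mu_{\ell_i}R$, so it suffices to show $\mu_{\ell_1}\mu_{\ell_2}^{-1}\in R$; that is, that $\mu_{\ell_1-\ell_2}:=\lambda_{\ell_1-\ell_2}|_{H_d}$ extends to a character of $G$.

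The last step is this arithmetic claim, and it rests on three elementary observations: (a) $d\mid\ell_1-\ell_2$; (b) $\ell_1\equiv\ell_2\pmod 2$ — every $\ell$ with $(\ell,n)=d$ has the parity of $d$, since if $d$ is even then $\ell$ is even, and if $d$ is odd then $n/d$ is even and $\ell/d$, a unit modulo $n/d$, is odd; and (c) if $(\ell_1-\ell_2)/d$ is odd then $n/d$ is odd (same unit-modulo-an-even-number reasoning). A helpful structural point is that $\mu_{\ell_1-\ell_2}$ is already $Y$-invariant on $H_d$: using $YX^{n/d}Y^{-1}=X^{-n/d}Z^{(\alpha+\gamma)n/d}$ and the relation $t(\alpha+\gamma)\equiv 2a+i$, one computes $\mu_{\ell_1-\ell_2}(YX^{n/d}Y^{-1})/\mu_{\ell_1-\ell_2}(X^{n/d})=w^{(\ell_1-\ell_2)(n/d)i}=(w^{ni})^{(\ell_1-\ell_2)/d}=1$. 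I would then run the four cases of Lemma~\ref{characters} ($\beta\in\{0,1\}$ and the parity of $-\alpha-\gamma$): choose $\chi\in\widehat{G}$ whose value on $Z$ matches $\mu_{\ell_1-\ell_2}(Z)=w^{t(\ell_1-\ell_2)}$, which is solvable because $\ell_1-\ell_2$ is even, so this value lies in the order-$m/2$ subgroup of $\mathbb{F}_p^*$ that the characters of $G$ hit on $Z$; then match the value on $X^{n/d}$, namely $(w^{an})^{(\ell_1-\ell_2)/d}=(w^{t\alpha})^{(\ell_1-\ell_2)/d}$, using the faithfulness relation $\tfrac n2(\alpha+\gamma)\equiv\alpha+\tfrac m2\pmod m$ of Lemma~\ref{identity} together with observation~(c) to produce the sign $(-1)$ that is needed precisely when $(\ell_1-\ell_2)/d$ and $n/d$ are odd. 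I expect this case analysis to be the main obstacle: it amounts only to solving linear congruences modulo $m$ and $p-1$, but one must track the presentation parameters $(\alpha,\beta,\gamma)$, the faithfulness constraint, and the parities of $d$, $n/d$, $(\ell_1-\ell_2)/d$ simultaneously. Everything preceding it — the reduction to the coset statement and the confinement of kernels to $H_d$ — is routine given Lemmas~\ref{every} and \ref{characters}.
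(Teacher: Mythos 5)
Your proposal is correct and follows essentially the same route as the paper: reduce via Lemma~\ref{every} to the character-group orbit of $\rho_{\ell_i}$, confine all kernels to $\langle X^{n/(n,\ell)},Z\rangle$, and then verify, using the faithfulness identity $\tfrac n2(\alpha+\gamma)\equiv\alpha+\tfrac m2\pmod m$ together with a parity split on $n/(n,\ell)$, that the two orbits give the same kernels. Your coset formulation $\mu_{\ell_1}R=\mu_{\ell_2}R$ in $\widehat{H_d}$ is just a cleaner packaging of the paper's statement that the two lists of linear diophantine equations agree up to permutation, and the deferred four-case extension argument is exactly the arithmetic the paper carries out.
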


\begin{proof}
Fix $\phi = {\theta}_{\ell}$.  Let ${\rho}_{\ell}$ denote the representation 
$$X \xrightarrow{{\rho}_{\ell}} \begin{pmatrix} w^{a\ell} & 0\\  0 & w^{a\ell + i \ell} \end{pmatrix}, Y \xrightarrow{{\rho}_{\ell}} \begin{pmatrix} 0 & w^r \\  w^r & 0 \end{pmatrix}, Z \xrightarrow{{\rho}_{\ell}}  \begin{pmatrix} w^{t\ell} & 0\\  0 & w^{t\ell} \end{pmatrix}.$$

We previously showed in the proof of Lemma \ref{every} that $R({\Gamma}_{\phi},V_{{\rho}_{\ell}}) \ncong {\mathbb{Z}}_p$.  Clearly,
$${\rho}_{\ell}(X^c Z^d) = \begin{pmatrix} w^{(ca\ell + \ell dt)} & 0\\0 & w^{(ca \ell +ci \ell + \ell dt)}\end{pmatrix}.$$

By Lemma \ref{every}, since the collection $\{ \rho \textrm{ with } R({\Gamma}_{\phi},V_{\rho}) \ncong \mathbb{Z}_p \}$ corresponds to the orbit of the representation ${\rho}_{\ell}$ under the action of the character group, the kernel of any such representation $\rho$ satisfies
$$ker(\rho) \subseteq \langle X^{\frac{p-1}{(p-1, i\ell)}}, Z \rangle = \langle X^{\frac{n}{(n,\ell)}}, Z \rangle = \{ g \in G \textrm{ with } {\rho}_{\ell}(g) \textrm{  a scalar matrix } \}.$$ 

Clearly, $\frac{n}{(n,\ell)}$ depends only on $(n,\ell)$.

\smallskip
Additionally,
$$ ker({\rho}_{\ell}) ={ \{ X^{\frac{n}{(n,\ell)}\sigma}Z^{\tau} \textrm{ such that } w^{t(\alpha \frac{\ell}{(n, \ell})\sigma+ \ell \tau)}=1\}}, $$
where $ 0 \leq \sigma \leq (n,\ell) -1,\textrm{ and } 0 \leq \tau \leq m-1. $

\smallskip

Therefore, the kernel of ${\rho}_{\ell}$ can be determined from the solutions to the linear diophantine equation 
\begin{eqnarray}
\alpha \frac{\ell}{(n, \ell)}\sigma+ \ell \tau
\end{eqnarray}

\noindent
where the coefficients $\alpha \frac{\ell}{(n, \ell)}$ and $ \ell$ are taken mod ($m$), and  $\sigma, \tau$ are in the appropriate intervals.  These intervals are independent of $\ell$, depending only on the greatest comon divisor of $\ell$ and $n$.  

Now, suppose $\beta = 0, 2t'+1= -\alpha -\gamma$.  Then, from our previous calculation in Lemma \ref{characters}, we see that the character group restricted to $\langle X, Z \rangle$ is cyclic with generator $\chi$ given by
$$X \xrightarrow{\chi} w^{t(-\alpha - \gamma)}, Z \xrightarrow{\chi} w^{-2t}.$$
Thus, 
$$ker(\chi \cdot {\rho}_{\ell}) ={ \{ X^{\frac{n}{(n,\ell)}\sigma}Z^{\tau} \textrm{ such that } w^{t(\alpha \frac{\ell}{(n, \ell)}\sigma+ \ell \tau+ \frac{n}{(n,\ell)}(-\alpha -\gamma)\sigma - 2 \tau)}=1\}}, $$
where $ 0 \leq \sigma \leq (n,\ell) -1, 0 \leq \tau \leq m-1 .$

\smallskip
Therefore, the kernel of $\chi \cdot {\rho}_{\ell}$ can be determined by the solutions to the linear diophantine equation 
\begin{eqnarray*}
\alpha\frac{\ell}{(n, \ell)}\sigma+ \ell \tau + \frac{n}{(n,\ell)}(-\alpha -\gamma)\sigma - 2 \tau \\
=[\alpha \frac{\ell}{(n, \ell)}+  \frac{n}{(n,\ell)}(-\alpha -\gamma)]\sigma + [\ell -2]\tau
\end{eqnarray*}

where again the coefficients are mod ($m$), and $\sigma, \tau$ are as above.  It's clear that to go from  ${\chi}^k \cdot {\rho}_{\ell}$ to ${\chi}^{k+1} \cdot {\rho}_{\ell}$  one simply adds $\frac{n}{(n,\ell)}(-\alpha -\gamma)\sigma - 2 \tau$ to the previous linear diophantine equation.  The key observation is that the equation $\frac{n}{(n,\ell)}(-\alpha -\gamma)\sigma - 2 \tau$ is independent of $\ell$, depending only on $(n, \ell)$.  

Thus given ${\ell}_1, {\ell}_2$ having the same greatest common divisor with $n$, we will show their corresponding lists of linear diophantine equations are the same up to permutation.  This is sufficient, as in both instances the variables $\sigma$ and $\tau$ correspond to the power of the same element of $G$, $\sigma$ corresponding to a power of $X^{\frac{n}{(n,\ell)}}$ and $\tau$ to a power of $Z$.  Explicitly, we show that for any $\ell$ the diophantine equation corresponding to the kernel of ${\rho}_{\ell}$, 
$$\alpha \frac{\ell}{(n, \ell)}\sigma+ \ell \tau = \frac{\ell}{(n, \ell)}(\alpha \sigma + (n, \ell ) \tau)$$
is the diophantine equation corresponding ${\chi}^k \cdot {\rho_{(n, \ell)}}$, for some $k$.  By the periodicity of the action of the character group this suffices.\\

First, suppose $2|\frac{n}{(n,\ell)}$.  Now we must only show $k \alpha \sigma + k (n, \ell) \tau$  is the diophantine equation corresponding to some power of $\chi$ acting on ${\rho}_{(n,\ell)}$.  Since $2|\frac{n}{(n,\ell)}$, we need only demonstrate this for $k$ odd.  But,
\begin{eqnarray*}
(n, \ell)[\frac{n}{(n,\ell)}(-\alpha -\gamma))\sigma -2\tau]&=&2[\frac{n}{2}(-\alpha -\gamma))\sigma]-2(n,\ell)\tau \\
&=&2[-\alpha + \frac{m}{2}]\sigma -2(n,\ell)\tau \\&=& -2\alpha -2(n,\ell) \tau.
\end{eqnarray*}

Therefore, any odd multiple of $\alpha \sigma + (n, \ell ) \tau$ can be achieved.

Now suppose $2 \not| \frac{n}{(n, \ell)}$.  We will show that there exists a $k$ with
$$k [\frac{n}{(n,\ell)}(-\alpha -\gamma))\sigma -2\tau]=\alpha \sigma + (n, \ell ) \tau.$$

Set $k = (\frac{m}{2} - \frac{(n,\ell)}{2})$.  Then, $(\frac{m}{2} - \frac{(n,\ell)}{2})[\frac{n}{(n,\ell)}(-\alpha -\gamma))\sigma -2\tau]$
\begin{eqnarray*}
 &=& (\frac{m}{2} - \frac{(n,\ell)}{2})[\frac{n}{(n,\ell)}(-\alpha -\gamma))\sigma + (\frac{m}{2} - \frac{(n,\ell)}{2})(-2\tau) \\ 
&=& [\frac{m}{2}\frac{n}{(n,\ell)}(-\alpha -\gamma) -\frac{n}{2}(-\alpha -\gamma)]\sigma + (n,\ell)\tau\\
&=& [\frac{m}{2}\frac{n}{(n,\ell)}(-\alpha -\gamma) + \alpha - \frac{m}{2}]\sigma + (n,\ell)\tau\\
&=& [\frac{m}{2}[\frac{n}{(n,\ell)}(-\alpha -\gamma) -1]  + \alpha]\sigma + (n,\ell)\tau\\
&=& \alpha \sigma + (n, \ell ) \tau
\end{eqnarray*}
as required since $ [\frac{n}{(n,\ell)}(-\alpha -\gamma) -1]$ is even.  Thus, we have showed that when $\beta = 0, 2t'+1= -\alpha -\gamma$, if ${\phi}_i = {\theta}_{{\ell}_i}$ for $i = 1,2 $ are such that $ (n, {\ell}_1) = (n,{\ell}_2) $,  then as sets
$$\{ker(\rho), \textrm{where } R({\Gamma}_{{\phi}_1},V_{\rho}) \ncong \mathbb{Z}_p \}  = \{ker(\rho), \textrm{where } R({\Gamma}_{{\phi}_2},V_{\rho}) \ncong \mathbb{Z}_p \}. $$

When $\beta = 1, 2t'+1=-\alpha -\gamma$, the identical argument holds.  In the remaining two cases, the argument is similar, though the group of characters restricted to $\langle  X, Z \rangle$ is not cyclic.  

\end{proof}

Before proving what amounts to the converse, we provide an example of the lists of diophantine equations for representations with the same greatest common divisor, as featured in the proof of Proposition \ref{diophantine}.

\begin{example}
Let (n,m) = (20,24), $(\alpha, \beta, \gamma) $= (18,0,9).  So $G$ is an extension of $D_{20}$ by ${\mathbb{Z}}/24{\mathbb{Z}}$, and $G$ is given by (18,0,9).  As the identity of Lemma \ref{identity} is satisfied, $G$ admits a faithful irreducible two-dimensional complex representation.  The collection of ${\theta}_{\ell}$ for which $(\ell,$n) = 1 is precisely ${\theta}_{\textrm{1}}, {\theta}_{\textrm{3}}, {\theta}_{\textrm{7}}, \textrm{ and } {\theta}_{\textrm{9}}$.  One can see that the linear diophantine equations corresponding to the kernels of the representations $\theta_{\ell}$ with $R({\Gamma}_{\phi}(V_{\theta_{\ell}}) \ncong {\mathbb{Z}}_p$ are the same up to permutation.  
\end{example}
\begin{tabular}{l*{4}{c}}
Representation & $\ell = 1$ & $\ell = 3$ & $\ell = 7$ & $\ell = 9$\\
\hline ${\rho}_{\ell}$ & $18 \sigma + \tau$ & $6\sigma + 3\tau$ & $6\sigma + 7\tau$ & $18\sigma + 9\tau$\\
  $\chi \cdot {\rho}_{\ell}$ & $6 \sigma + 23\tau$ & $18\sigma + \tau$ & $18\sigma + 5\tau$ & $6\sigma + 7\tau$\\
 ${\chi}^2 \cdot {\rho}_{\ell}$ & $18 \sigma + 21\tau$ & $6\sigma + 23\tau$ & $6\sigma + 3\tau$ & $18\sigma + 5\tau$\\
 ${\chi}^3 \cdot {\rho}_{\ell}$ & $6 \sigma + 19\tau$ & $18\sigma + 21\tau$ & $18\sigma + \tau$ & $6\sigma + 3\tau$\\
 ${\chi}^4 \cdot {\rho}_{\ell}$ & $18 \sigma + 17\tau$ & $6\sigma + 19\tau$ & $6\sigma + 23\tau$ & $18\sigma + \tau$\\
 ${\chi}^5 \cdot {\rho}_{\ell}$ & $6 \sigma + 15\tau$ & $18\sigma + 17\tau$ & $18\sigma + 21\tau$ & $6\sigma + 23\tau$\\
 ${\chi}^6 \cdot {\rho}_{\ell}$ & $18 \sigma + 13\tau$ & $6\sigma + 15\tau$ & $6\sigma + 19\tau$ & $18\sigma + 21\tau$\\
 ${\chi}^7 \cdot {\rho}_{\ell}$ & $6 \sigma + 11\tau$ & $18\sigma + 13\tau$ & $18\sigma + 17\tau$ & $6\sigma + 19\tau$\\
 ${\chi}^8 \cdot {\rho}_{\ell}$ & $18 \sigma + 9\tau$ & $6\sigma + 11\tau$ & $6\sigma + 15\tau$ & $18\sigma + 17\tau$\\
 ${\chi}^9 \cdot {\rho}_{\ell}$ & $6 \sigma + 7\tau$ & $18\sigma + 9\tau$ & $18\sigma + 13\tau$ & $6\sigma + 15\tau$\\
${\chi}^{10} \cdot {\rho}_{\ell}$ & $18 \sigma + 5\tau$ & $6\sigma + 7\tau$ & $6\sigma + 11\tau$ & $18\sigma + 13\tau$\\
${\chi}^{11} \cdot {\rho}_{\ell}$ & $6 \sigma + 3\tau$ & $18\sigma + 5\tau$ & $18\sigma + 9\tau$ & $6\sigma + 11\tau$\\
${\chi}^{12} \cdot {\rho}_{\ell}$ & $18 \sigma + \tau$ & $6\sigma + 3\tau$ & $6\sigma + 7\tau$ & $18\sigma + 9\tau$\\

\end{tabular}

\bigskip
Clearly, the lists above are the same up to cyclic permutation.  In each column, the next diophantine equation is obtained by adding $12\sigma -2\tau$ to the previous equation with coefficients viewed modulo $m$.  Note that different linear diophantine equations sometimes correspond to the same kernel.
\begin{remark}
The lists of diophantine equations may be identical for ${\ell}_1, {\ell}_2$ with different greatest common divisors with $n$.  Again let (n,m) = (20,24), $(\alpha, \beta, \gamma) $= (18,0,9).  Let ${\ell}_1 = 5$, ${\ell}_2 = 1$.  
\end{remark}
\noindent
Here ${\rho}_5 $ has diophantine equation $18 \sigma + 5\tau$ which appears in the table corresponding to $\ell$ with $(n,\ell) = 1$.  Moreover, for both $\ell = 1$ and $\ell = 5$, $$\frac{n}{(n,\ell)}(-\alpha -\gamma))\sigma -2\tau = 12\sigma -2 \tau \textrm{ modulo } m,$$ therefore their lists are identical, though the collection of kernels is not, since when $\ell = 5$ the value of $\sigma$ corresponds to a power of $X^4$.\\

We now prove that if $\phi$ is trivial on the center of $G$, then the fusion of $\phi$ can be determined by the collection $\{ker(\theta), \textrm{where } R({\Gamma}_{\phi},V_{\theta}) \ncong \mathbb{Z}_p \}$.

\begin{proposition}
\label{converse}
Suppose ${\phi}_1, {\phi}_2$ are trivial on $\langle Z \rangle = Z(G)$.  Then, if
$$\{ker(\rho), \textrm{where } R({\Gamma}_{{\phi}_1},V_{\rho}) \ncong \mathbb{Z}_p \}  = \{ker(\rho), \textrm{where } R({\Gamma}_{{\phi}_2},V_{\rho}) \ncong \mathbb{Z}_p \}, $$
then, ${\phi}_1,$ and ${\phi}_2$ have the same fusion.
\end{proposition}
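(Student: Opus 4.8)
The plan is to condense the fusion of $\phi_i$ into a single numerical invariant and then to recover that invariant from the kernel set. Since $\phi_1$ and $\phi_2$ are trivial on $\langle Z\rangle=Z(G)$, Lemma~\ref{every} lets us write $\phi_i=\theta_{\ell_i}$. The first step is to observe that the fusion of $\theta_\ell$ depends only on $d_\ell:=(n,\ell)$. Indeed, $\theta_\ell$ sends $Z$ to the identity, $Y$ to the fixed antidiagonal matrix with both nonzero entries equal to $1$, and $X$ to $\mathrm{diag}(w^{i\ell},w^{-i\ell})$, where $w^i$ has order $n$; so $\theta_\ell(G)$ is the subgroup of $N(D)$ generated by that reflection together with $\{\mathrm{diag}(\zeta,\zeta^{-1}):\zeta\in\langle w^{i\ell}\rangle\}$. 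As $\langle w^{i\ell}\rangle=\langle w^{id_\ell}\rangle$ depends only on $d_\ell$, the subgroup $\theta_\ell(G)\subseteq GL_2(\mathbb{F}_p)$ depends only on $d_\ell$; consequently the partition of $N$ into orbits of the conjugation action of $G$ on $N$ through $\theta_\ell$ --- that is, the fusion of $N$ in $\Gamma_{\theta_\ell}$ --- depends only on $d_\ell$. Hence it is enough to show that $\{ker(\rho):\rho\in\Sigma,\ R(\Gamma_{\theta_{\ell_1}},V_\rho)\ncong\mathbb{Z}_p\}=\{ker(\rho):\rho\in\Sigma,\ R(\Gamma_{\theta_{\ell_2}},V_\rho)\ncong\mathbb{Z}_p\}$ forces $d_{\ell_1}=d_{\ell_2}$.

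For this, write $\mathcal{K}_\ell$ for the kernel set attached to $\theta_\ell$ and let $H_\ell\subseteq G$ be the subgroup generated by $\bigcup\mathcal{K}_\ell$; let $\pi\colon G\to G/Z(G)\cong D_{2n}$ be the projection. On one hand, the computation in the proof of Proposition~\ref{diophantine} shows every member of $\mathcal{K}_\ell$ lies in $\langle X^{n/d_\ell},Z\rangle$, so $\pi(H_\ell)\subseteq\langle r^{n/d_\ell}\rangle$, a cyclic group of order $d_\ell$. On the other hand, I claim $\pi(H_\ell)=\langle r^{n/d_\ell}\rangle$. To see this, note that $\rho_\ell(X^{n/d_\ell}Z^{\tau})$ is a scalar matrix for every $\tau$: since $d_\ell\mid\ell$ the exponent $(n/d_\ell)\,i\ell$ is a multiple of $ni$, hence $\equiv 0\pmod{p-1}$. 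Thus it suffices to produce, for some $\tau$, a character $\chi$ of $G$ with $\chi(X^{n/d_\ell}Z^{\tau})=\rho_\ell(X^{n/d_\ell}Z^{\tau})^{-1}$, for then $X^{n/d_\ell}Z^{\tau}\in ker(\chi\cdot\rho_\ell)$, which lies in $\mathcal{K}_\ell$ by Lemma~\ref{every}, so $r^{n/d_\ell}\in\pi(H_\ell)$. Such a $\chi$ is found directly from the explicit description of the character group of $G$ in Lemma~\ref{characters}, treating its four cases and using the parity of $n/d_\ell$. Granting the claim, $\mathcal{K}_{\ell_1}=\mathcal{K}_{\ell_2}$ gives $H_{\ell_1}=H_{\ell_2}$, hence $\langle r^{n/d_{\ell_1}}\rangle=\langle r^{n/d_{\ell_2}}\rangle$ in the fixed group $D_{2n}$, hence $n/d_{\ell_1}=n/d_{\ell_2}$ and $d_{\ell_1}=d_{\ell_2}$; by the first step $\phi_1$ and $\phi_2$ then have the same fusion.

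The main obstacle is the claim that $\pi(H_\ell)$ is all of $\langle r^{n/d_\ell}\rangle$ rather than some proper subgroup: showing that the kernels appearing in a single orbit under the character group are ``large enough'' to see $d_\ell$ is precisely the point that cannot be argued formally and requires the explicit character computation of Lemma~\ref{characters}, with the attendant case split on the shape of $(\alpha,\beta,\gamma)$ and on the parity of $n/d_\ell$. The remaining ingredients are in hand or routine: the reduction to $\theta_{\ell_i}$ via Lemma~\ref{every}, the containment $ker(\rho)\subseteq\langle X^{n/d_\ell},Z\rangle$ from Proposition~\ref{diophantine}, and the final descent through $\pi$ to $D_{2n}$. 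A minor point to keep in view is that the fusion of $N$ in $\Gamma_{\phi_i}$ is defined by conjugation through $\phi_i$, so that the equality $\phi_1(G)=\phi_2(G)$ inside $GL_2(\mathbb{F}_p)$ really does produce the same partition of $N$ and not just partitions agreeing up to relabeling.
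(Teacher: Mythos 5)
Your overall architecture matches the paper's: reduce the fusion of $\theta_\ell$ to the single invariant $(n,\ell)$, bound every kernel inside $\langle X^{n/(n,\ell)}, Z\rangle$ via Proposition \ref{diophantine}, and then try to recover $n/(n,\ell)$ as the minimal positive power of $X$ occurring among kernel elements. But there is a genuine gap at exactly the step you yourself flag as ``the main obstacle'': you assert, without proof, that for some $\tau$ there is a character $\chi$ of $G$ with $\chi(X^{n/d_\ell}Z^{\tau})=\rho_\ell(X^{n/d_\ell}Z^{\tau})^{-1}$, so that $X^{n/d_\ell}Z^{\tau}$ actually lies in some member of $\mathcal{K}_\ell$. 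Saying such a $\chi$ ``is found directly from Lemma \ref{characters}'' is not an argument: whether the required scalar lies in the image of the character group restricted to $\langle X,Z\rangle$ is a divisibility question depending on $(\alpha,\beta,\gamma)$ and on the parity of $(n,\ell)$, and it is where the entire content of the proposition sits. Without it your lower bound $\pi(H_\ell)\supseteq\langle r^{n/d_\ell}\rangle$ is unestablished and the descent to $d_{\ell_1}=d_{\ell_2}$ does not go through.

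The paper's proof makes this step precise and, importantly, does \emph{not} prove your claim uniformly. It splits on the parity of $(n,\ell)$: when $2\mid(n,\ell)$ it falls back on the dihedral results of \cite{meyer} (representations inflated from $D_{2n}$ itself then occur among those with nontrivial deformation ring, and their kernels contain $\langle Z\rangle$), and only when $(n,\ell)$ is odd does it exhibit an element $X^{n/(n,\ell)}Z^{-B}$ in a kernel, by repeatedly adding $\tfrac{n}{(n,\ell)}(-\alpha-\gamma)\sigma-2\tau$ to the form $\alpha\sigma+(n,\ell)\tau$ until the coefficient of $\tau$ equals $1$. That manipulation uses oddness of $(n,\ell)$ essentially: since $m$ is even, subtracting multiples of $2$ from an even $(n,\ell)$ can never produce coefficient $1$ modulo $m$, so your proposed mechanism does not obviously work in the even case and a different route (the paper's appeal to the $D_{2n}$ analysis) is needed there. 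To complete your proof you must either reproduce that parity case split or genuinely carry out the character computation across the four cases of Lemma \ref{characters}; as written, the key existence claim is only asserted.
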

\begin{proof}

Since $N = {\mathbb{Z}}/p{\mathbb{Z}}\times {\mathbb{Z}}/p{\mathbb{Z}}$ is abelian, it is clear that two distinct elements of $N$ are fused exactly when they are in the same orbit under $\phi$.  As $\phi$ is trivial on the center of $G$ as noted previously, $\phi$ is a representation for $D_{2n} =G/\langle Z \rangle$ inflated up to $G$.  It follows that the fusion of $\phi = \theta_{\ell}$ is exactly as calculated in \cite[Prop. 4.8]{meyer}.  Specifically, setting $\omega = w^i$, the fusion orbits are as follows:
\begin{enumerate}
\item $ Orb \left(\begin{array}{c}0\\0\end{array}\right) $ = $\left\{ \left(\begin{array}{c}0\\0\end{array}\right) \right\}_.$
\item For $\left(\begin{array}{c}x\\y\end{array}\right) \in \mathbb{F}_{p}^{*} \times \mathbb{F}_{p}^{*}$, $ y/x \in \langle \omega^{\ell} \rangle$, we have \\
$ Orb \left(\begin{array}{c}x\\y\end{array}\right) $ = $\left\{ \left(\begin{array}{c}x\\y\end{array}\right)_, \left(\begin{array}{c}{\omega}^{\ell}x\\{\omega}^{-\ell}y\end{array}\right)_{,...,}\left(\begin{array}{c}{\omega}^{(k - 1)\ell}x\\{\omega}^{-(k - 1)\ell}y\end{array}\right)\right\}_.$
\item For $\left(\begin{array}{c}x\\y\end{array}\right)$ not in 1. or 2., we have  \\ $ Orb \left(\begin{array}{c}x\\y\end{array}\right) $ = $\left\{ \left(\begin{array}{c}x\\y\end{array}\right)_, \left(\begin{array}{c}{\omega}^{\ell}x\\{\omega}^{-\ell}y\end{array}\right)_{,...,}\left(\begin{array}{c}{\omega}^{(k - 1)\ell}x\\{\omega}^{-(k - 1)\ell}y\end{array}\right)_,\left(\begin{array}{c}y\\x\end{array}\right)_{,...,}\left(\begin{array}{c}{\omega}^{-(k - 1)\ell}y\\{\omega}^{(k - 1)\ell}x\end{array}\right)\right\}_.$

\end{enumerate}

These orbits completely determine the fusion of $\phi$, as elements of $N$ are fused exactly when they are distinct elements of the same orbit.  In particular, we see that fusion depends only on the greatest common divisor of $\ell$ and $n$.  That is, ${\theta}_{\ell_1}$ and ${\theta}_{\ell_2}$ have the same fusion if and only if $(n,{\ell}_1) = (n,{\ell}_2)$ \\

Let $\phi = {\theta}_{\ell}$.  If $2|(n, \ell)$, by the main results in \cite{meyer} one can use the kernels of the representation of $G / \langle Z \rangle $ to determine fusion.  If $\ell$ is odd, however, there will not be any representations of  $G / \langle Z \rangle = D_{2n}$ with non-trivial universal deformation rings.  It suffices, therefore, to only distinguish between ${\theta}_{\ell}$, where $2\not| (n, \ell)$ .  It follows from the argument in Proposition \ref{diophantine}, that if $(n, \ell) = 1$, then for any $\theta$ with $R({\Gamma}_{\phi},V_{\theta}) \ncong {\mathbb{Z}}_p$, $ker(\theta) \subset \langle Z \rangle$.  Thus, the result follows once we demonstrate that for $\ell$ odd, there exists a representation $\theta$ with $R({\Gamma},V_{\theta}) \ncong {\mathbb{Z}}_p$ and $X^{\frac{n}{n,\ell}}Z^{\tau} \in ker(\theta)$, for some $\tau$.  As this corresponds to a solution to the diophantine equation for $\sigma = 1$, it is the minimal power of $X$ in any element of the kernel of any representation with non-trivial universal deformation ring.  Therefore, this minimal power of $X$ will determine $(n, \ell)$, and hence the fusion of $\phi$.  

But we know that $\alpha \sigma + (n, \ell) \tau $ is one of the diophantine equations corresponding to $\phi = {\theta}_{\ell}$ by Proposition \ref{diophantine}.  Since $(n, \ell)$ is odd, by repeatedly adding $\frac{n}{(n,\ell)}(-\alpha -\gamma))\sigma -2\tau$, we obtain the equation $B \sigma + 1 \tau$ for some $B$ in ${\mathbb{Z}}/m{\mathbb{Z}} $.  But $-B \in \langle 1 \rangle $, so therefore $(\sigma, \tau) = (1,-B)$ is a zero of this diophantine equation.  Therefore $X^{\frac{n}{n,\ell}}Z^{-B}$ is in the kernel of the appropriate representation $\theta$.  The result follows.
\end{proof}

\section{Proof of main results}
\label{proof}
We will now prove our main results.  First, let us relax our hypotheses from the last sections.  Now $G$ is any finite irreducible subgroup of $Gl_2({\mathbb{C}})$.  Again, we associate to $G$ on odd prime $p=p(G)$ such that ${\mathbb{F}}_p G$ is semisimple and ${\mathbb{F}}_p$ is sufficiently large for $G$.\\

We now prove Theorem \ref{Th1}.  The function $V \to R({\Gamma}_{\phi},V)$ is non-constant if and only if the representation $\phi$ is trivial on the center of $G$.
\begin{proof}
Recall that since the irreducible representation $G \subseteq Gl_2({\mathbb{C}})$ is realizable over ${\mathbb{F}}_p$ and $G$ is a semisimple subgroup of $Gl_2({\mathbb{F}}_p)$, if $\pi$ is the natural projection from $Gl_2({\mathbb{F}}_p) \xrightarrow{\pi} Gl_2({\mathbb{F}}_p)$ then,

$$\pi(G) =
\begin{cases}
D_{2n}, \textrm{ for some } n \\
{\mathbb{Z}}/m{\mathbb{Z}}, \textrm{ for some } m\\
A_4,
A_5, \textrm{ or }
S_4.
\end{cases} $$

 By \cite[Cor. 3.4]{meyer} any irreducible $\phi$ for which such a module $V$ exists must be trivial on the center of $G$.  Therefore, we need only prove the converse.  We proceed in cases.  If $\pi(G)$ is cyclic, $G$ is contained in a Cartan subgroup of $Gl_2({\mathbb{F}}_p)$ and is thus abelian.  Then, since ${\mathbb{F}}_p$ is sufficiently large, $G$ admits no irreducible faithful two-dimensional complex representations.  Therefore, no such irreducible $\phi$ exists (See \cite{meyer} for analysis of the function $V \to R(\Gamma, V)$ for $G$ finite, abelian).  For $G$ with $\pi(G)$ not cyclic, if $\phi$ is trivial on $Z(G)$ then $\phi$ is a representation of $D_{2n}, A_4, A_5$ or $S_4$.  Since $A_4$ and $A_5$ have no two-dimensional irreducible complex representations, for all irreducible finite subroups of $Gl_2({\mathbb{C}})$ that are central extensions of $A_4, \textrm{or } A_5$ by cyclic groups, the function $V \to R(\Gamma,V)$ is the constant function ${\mathbb{Z}}_p$, for every irreducible two-dimensional $\phi$. 

For a central extension of $S_4$ by a cyclic group which admits a faithful irreducible two-dimensional complex representation, the only $\phi$ irreducible and trivial on the center of $G$ is the representation of $D_6$ inflated first to $S_4$, then to $G$.  By inspection, $R({\Gamma}_{\phi}, V_{\phi}) \ncong {\mathbb{Z}}_p$. \\
Thus, we have reduced to the case when $\pi(G)$ dihedral.  Recall that the second cohomology of $D_{2n}$ with coefficients in ${\mathbb{Z}}/m{\mathbb{Z}}$ depends on the parity of $n, m$,
$$\charg{2}{D_{2n}}{{\mathbb{Z}}/m{\mathbb{Z}}} =
\begin{cases}
0, \textrm{ if } 2 \not| m\\
{\mathbb{Z}}/2{\mathbb{Z}}, \textrm{ if } 2 \not| n, 2|m\\
{\mathbb{Z}}/2{\mathbb{Z}}\times {\mathbb{Z}}/2{\mathbb{Z}} \times {\mathbb{Z}}/2{\mathbb{Z}}, \textrm{ if } 2|n, m.

\end{cases}$$
For $n$ odd, for any such $G$, one can find a representation of $D_{2n}$ with universal deformation ring not the $p$-adic integers by the main results in \cite{meyer}.  This representation can be inflated to $G$.  For $n$ even and $m$ odd since $ker(\pi)$ are scalar matrices, no such $G$ exists.  Therefore, by Lemma \ref{every} the result follows.
\end{proof}

We now prove Theorem \ref{Th2}.

\begin{proof}
Suppose $G \subseteq Gl_2({\mathbb{C}})$, $G$ finite and irreducible.  Suppose $\phi$ is irreducible and the function $V \to R(\Gamma,V_{\phi})$ is nonconstant.  Then, by the comments above, $G$ is a central extension of a dihedral group by a cyclic group,  or a central extension of $S_4$ by a cyclic group.  In the latter case, since $\phi$ is trivial on the center of $G$, $\phi$ must be the representation of $G$ inflated from the irreducible representation of $D_6 = S_4/({\mathbb{Z}}/2{\mathbb{Z}} \times {\mathbb{Z}}/2{\mathbb{Z}})$, so the result is trivial.  Thus, we reduce to the former case.   Let ${\mathbb{F}}_p$ be as above, and suppose $\pi(G)$ is dihedral.  We show that one can determine the fusion of ${\mathbb{Z}}/p{\mathbb{Z}}\times {\mathbb{Z}}/p{\mathbb{Z}}$ in $\Gamma$ from the graph of $V \to R(\Gamma,V)$ whenever this function is not constant on the collection of absolutely irreducible two-dimensional ${\mathbb{F}}_p \Gamma$-modules.\\

Again, by the comments inside the proof of Theorem \ref{Th1}, we reduce to the situation when both $n, m$ are even.  Therefore assume $2 | n, m,$ and $G$ is given by $\langle X, Y, Z \rangle$, $(\alpha, \beta, \gamma)$, where $\beta = 0, 1$. 

Since the function $V \to R({\Gamma}_{\phi},V)$ is not the constant function $V \to {\mathbb{Z}}_p$, $\phi$ is trivial on the center of $G$, so $\phi = {\theta}_{\ell}$ for some representation of $G/ \langle Z \rangle = D_{2n}$.  By Propositions \ref{diophantine}, \ref{converse},  and the proof of Theorem \ref{Th1}, the fusion of ${\mathbb{Z}}/p{\mathbb{Z}}\times {\mathbb{Z}}/p{\mathbb{Z}}$ in $\Gamma$ is determined by $(n,\ell) $.  That is, $\phi_1$ and $\phi_2$ have the same fusion exactly when $$\{ker(\rho), \textrm{where } R({\Gamma}_{{\phi}_1},V_{\rho}) \ncong \mathbb{Z}_p \}  = \{ker(\rho), \textrm{where } R({\Gamma}_{{\phi}_2},V_{\rho}) \ncong \mathbb{Z}_p \}.$$

Moreover, in the proof of Proposition \ref{converse} we see explicitly how to use the collection of kernels to see the greatest common divisor of any $\phi$ and hence its fusion.

Thus, the fusion of $\phi$ is uniquely determined by the set 
\begin{align*}
\{ker(\rho) : \rho \in \Sigma \textrm{ with } R(\Gamma,V_{\rho}) \ncong \mathbb{Z}_p\}.
\end{align*}
\end{proof}
Thus, the results of \cite{meyer} are generalized.  

\bibliographystyle{amsplain}

\end{document}